\newtheorem{theorem}{Theorem}[section]
\newtheorem{proposition}[theorem]{Proposition}
\newtheorem{corollary}[theorem]{Corollary}
\newtheorem{definition}[theorem]{Definition}
\theoremstyle{definition}
\newtheorem{remark}[theorem]{Remark}
\title[Realizations of slice hyperholomorphic functions]{Realizations of holomorphic and
slice hyperholomorphic functions: the Krein space case}
\thanks{Daniel Alpay thanks the Foster G. and Mary McGaw Professorship in
Mathematical Sciences, which supported this research.}
\author[D. Alpay]{Daniel Alpay}
\address{(DA) Schmid College of Science and Technology\\
Chapman University\\
One University Drive
Orange, California 92866\\
USA}
\email{alpay@chapman.edu}
\author[F. Colombo]{Fabrizio Colombo}
\address{(FC) Politecnico di
Milano\\Dipartimento di Matematica\\Via E. Bonardi, 9\\20133 Milano,
Italy}
\email{fabrizio.colombo@polimi.it}
\author[I. Sabadini]{Irene Sabadini}
\address{(IS) Politecnico di
Milano\\Dipartimento di Matematica\\Via E. Bonardi, 9\\20133 Milano\\ Italy}
\email{irene.sabadini@polimi.it}
\begin{document}
\maketitle

\begin{abstract} In this paper we treat realization results for operator-valued functions which are analytic in the complex sense or slice hyperholomorphic over the quaternions.
In the complex setting, we prove a realization theorem for an operator-valued function analytic in a neighborhood of the origin with a coisometric state space
operator thus generalizing an analogous result in the unitary case. A main difference with previous works is the use of reproducing kernel Krein spaces.
We then prove the counterpart of this result in the quaternionic setting. The present work is the first paper which presents a realization theorem with a state space which is a quaternionic Krein space.
\end{abstract}

\noindent AMS Classification. 30G35; 47B32; 46C20; 47B50.\\

\noindent {\em Key words}: Krein spaces; realizations of analytic functions;\\ slice hyperholomorphic functions; quaternionic analysis.

\section{Introduction}
\setcounter{equation}{0}

In this paper we consider realization results for operator-valued functions analytic in the neighborhood of the origin in two cases: the complex number setting and the quaternionic
setting. Recently, the authors proved realization theorems for functions associated to Pontryagin spaces  in the quaternionic setting, see e.g. \cite{acs1,MR3127378} and the
book \cite{zbMATH06658818}. The present work is the first paper which presents a realization theorem with a state space which is a quaternionic Krein space.\\

In the complex setting case, given a Krein space $\mathcal C$ (the coefficient space) we consider a $\mathbf L(\mathcal C,\mathcal C)$-valued function $\Phi$,
analytic in
$\mathbb D_{r_0}=\left\{z\in\mathbb C\,\,\text{{\rm such that}}\,\,|z|<r_0\right\}$
(with $r_0<1$) and continuous in $|z|\le r_0$ in the operator topology, and define
\begin{equation}
\label{huluberlu}
C_\Phi(z,w)=\frac{\Phi(z)+\Phi(w)^{[*]}}{1-z\overline{w}}
\end{equation}
where both $|z|\le r_0$ and $|w|\le r_0$. In \cite{MR903068}
Dijksma, Langer and de Snoo prove that $\Phi$ can be written as
\[
\Phi(z)=i{\rm Im}\, \Phi(0)+C(U+zI)(U-zI)^{-1}C^{[*]}
\]
where $U$ is a {\sl bounded} unitary operator in a Krein space $\mathcal K$ and $C$ is a bounded linear map from $\mathcal K$ into the coefficient space $\mathcal C$
(note that an everywhere defined unitary map in Krein space need not be continuous), and $[*]$ denotes Krein space adjoints; see also \cite{cdls,MR89a:47055} for related works.
A key tool in their argument is a result of Krein on boundedness of operators in
Hilbert spaces endowed with additional Hermitian forms; see
\cite{MR0187086}, \cite[p. 75]{MR92m:47068},
\cite{MR0024574}, \cite{MR0068116},  \cite{MR0045314}.\smallskip

We prove a similar result with the unitarity constraint
replaced by a coisometry constraint. Our method is different from \cite{MR903068} although, as in that paper (\cite[p. 133]{MR903068})
we use the sesquilinear form \eqref{phedre}. This form defines a bounded self-adjoint operator, and we associate to this operator
a reproducing kernel Krein space with reproducing kernel
\eqref{huluberlu}. This could be done on general grounds since, in view of the Krein space structure, both
\[
\frac{(I+\Phi(z))(I+\Phi(w)^{[*]})}{2(1-z\overline{w})}\quad{\rm and}\quad \frac{(I-\Phi(z))(I-\Phi(w)^{[*]})}{2(1-z\overline{w})}
\]
are differences of two positive definite functions, and so is
\[
C_\Phi(z,w)=\frac{(I+\Phi(z))(I+\Phi(w)^{[*]})-(I-\Phi(z))(I-\Phi(w)^{[*]})}{2(1-z\overline{w})}.
\]
Using a result of Laurent Schwartz, see \cite{schwartz}, one can assert that there exists an associated reproducing kernel
Krein space of vector-valued functions analytic in $\mathbb D_r$. One can also use arguments as in \cite{a2,Alpay92a}. Here we
construct a backward-shift invariant space. The boundedness questions are now not considered
using the above mentioned result of Krein, but using the reproducing kernel property (see
Proposition \ref{closed}) and Loewner's theorem. We take this opportunity to recall that
under the hypothesis of the countable axiom of choice, all (linear) everywhere defined operators in Hilbert
space are bounded; see \cite{MR48:6991}. The non continuous everywhere defined unitary maps mentioned above are not closed.\\

The paper consists of seven sections, this introduction being the first. The next three sections focus on the complex setting case.
We review some results on operator ranges in Section \ref{lydia}. Preliminary results,
and in particular the study of a certain associated Hermitian form, are gathered in Section \ref{sec3}. The realization theorem itself is proved in Section \ref{sec4}.
The last three sections are devoted to the quaternionic setting. Some facts on slice hyperholomorphic functions are recalled in Section \ref{quater}. In Section \ref{quater2}
we study an Hermitian form used to prove the quaternionic version of the realization theorem. This theorem is proved in turn in Section \ref{sec7}.

\section{Operator ranges}
\setcounter{equation}{0}
\label{lydia}
Operator ranges are a main tool in our construction, and in this section we discuss some relevant results  in the complex setting.
We refer in particular \cite{a1,a2,Alpay92a,zbMATH06526205}. The quaternionic case is postponed to Section \ref{quater}.\\

Let $\mathcal H$ be a Hilbert space over the complex numbers, and let $P$ denote a bounded Hermitian operator in $\mathcal H$.
We set
\[
P=\sigma|P|
\]

to be the polar decomposition of $P$, where $|P|$ is the absolute value of $P$
and $\sigma$ its sign. We denote by $|P|^{1/2}$ the unique positive squareroot of $|P|$, and endow ${\rm ran}~|P|^{1/2}$ with the following two forms:
\begin{align}
\label{shulamit111}
\langle |P|^{1/2}f,|P|^{1/2}g\rangle_P&=\langle f,(I-\pi)g\rangle_{\mathcal H}\\
[|P|^{1/2}f,|P|^{1/2}g]_P&=\langle \sigma f,(I-\pi)g\rangle_{\mathcal H},
\label{eglantine}
\end{align}
where $\pi$ is the orthogonal projection on $\ker P$. Setting
\[
P=|P|^{1/2}|P|^{1/2}\sigma,
\]
we note now that $|P|^{1/2}\pi=0$. Indeed, $\ker |P|=\ker P$ since $|P|=\sigma P$ and $\ker |P|=\ker |P|^{1/2}$ by e.g. the Cauchy-Schwarz inequality or the spectral theorem.
Hence, it holds that
\begin{align}
\label{michelle}
[|P|^{1/2}f,Pg]_P&=\langle |P|^{1/2}f, g\rangle_{\mathcal H}\\
\langle |P|^{1/2}f,Pg\rangle_P&=\langle f,|P|^{1/2}\sigma g\rangle_{\mathcal H}.
\label{michelle1}
\end{align}
Furthermore, on ${\rm ran}\,P$ the two Hermitian forms above take the form
\begin{align}
[Pf,Pg]_P&=\langle Pf,g\rangle_{\mathcal H}\\
\langle Pf,Pg\rangle_P&=\langle |P|f,g\rangle_{\mathcal H}.
\label{rtyui}
\end{align}

We refer to \cite{a1,zbMATH06526205} for a proof of the following proposition.

\begin{proposition}
The space $({\rm ran}\,P,\langle \cdot,\cdot\rangle_P)$ is a pre-Hilbert space whose closure is ${\rm ran}\,|P|^{1/2}$, with inner product
\eqref{shulamit111}. Furthermore the space ${\rm ran}\,|P|^{1/2}$ is a Krein space when
endowed with the form \eqref{eglantine}, and we have
\[
\langle |P|^{1/2}f,\sigma |P|^{1/2}g\rangle_P=[|P|^{1/2}f,|P|^{1/2}g]_P,\quad f,g\in\mathcal H.
\]
\label{milano1234}
\end{proposition}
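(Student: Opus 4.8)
The plan is to realize $({\rm ran}\,|P|^{1/2},\langle\cdot,\cdot\rangle_P)$ as a unitary copy of the closed subspace $(\ker P)^{\perp}$ of $\mathcal H$, and to read off the Krein space structure from a single fundamental symmetry. The first thing to check is that \eqref{shulamit111} and \eqref{eglantine} are well defined. Since $\ker|P|^{1/2}=\ker|P|=\ker P$, two preimages of the same vector in ${\rm ran}\,|P|^{1/2}$ differ by an element of $\ker P$; as $\pi$ is the orthogonal projection onto $\ker P$, we have $\langle f,(I-\pi)g\rangle_{\mathcal H}=\langle(I-\pi)f,(I-\pi)g\rangle_{\mathcal H}$, and, using that $\sigma$ commutes with $\pi$ and that $\sigma\pi=0$, also $\langle\sigma f,(I-\pi)g\rangle_{\mathcal H}=\langle\sigma(I-\pi)f,(I-\pi)g\rangle_{\mathcal H}$; hence both forms depend only on $|P|^{1/2}f$ and $|P|^{1/2}g$.

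Next I would prove the pre-Hilbert and completeness statements. By \eqref{rtyui}, $\langle Pf,Pf\rangle_P=\langle|P|f,f\rangle_{\mathcal H}=\||P|^{1/2}f\|_{\mathcal H}^{2}\ge 0$, and its vanishing forces $|P|^{1/2}f=0$, hence $Pf=\sigma|P|^{1/2}\,|P|^{1/2}f=0$; so $\langle\cdot,\cdot\rangle_P$ is an inner product on ${\rm ran}\,P$. Let $V$ send $f\in(\ker P)^{\perp}$ to $|P|^{1/2}f$. Then $V$ is a bijection onto ${\rm ran}\,|P|^{1/2}$ (injective since $\ker|P|^{1/2}\cap(\ker P)^{\perp}=\{0\}$, surjective since $|P|^{1/2}\pi=0$), and $\langle Vf,Vg\rangle_P=\langle f,(I-\pi)g\rangle_{\mathcal H}=\langle f,g\rangle_{\mathcal H}$ for $f,g\in(\ker P)^{\perp}$. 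Thus $V$ is a unitary from $(\ker P)^{\perp}$ onto $({\rm ran}\,|P|^{1/2},\langle\cdot,\cdot\rangle_P)$, so the latter is a Hilbert space with inner product \eqref{shulamit111}.

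The step I expect to be the main obstacle is the density of ${\rm ran}\,P$ in $({\rm ran}\,|P|^{1/2},\langle\cdot,\cdot\rangle_P)$, precisely because these operator-range spaces do \emph{not} carry the topology inherited from $\mathcal H$, so density must be verified in the intrinsic norm. Writing $Pf=|P|^{1/2}(\sigma|P|^{1/2}f)$ with $\sigma|P|^{1/2}f\in(\ker P)^{\perp}$, one gets $V^{-1}({\rm ran}\,P)=\sigma({\rm ran}\,|P|^{1/2})$; since $\sigma$ restricts to a bijective self-adjoint involution of $(\ker P)^{\perp}$ and ${\rm ran}\,|P|^{1/2}$ is dense in $(\ker P)^{\perp}$ (being the range of the self-adjoint operator $|P|^{1/2}$), its image $\sigma({\rm ran}\,|P|^{1/2})$ is dense in $(\ker P)^{\perp}$. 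Transporting back by the unitary $V$ shows ${\rm ran}\,P$ is dense in ${\rm ran}\,|P|^{1/2}$; together with the completeness above, the closure of $({\rm ran}\,P,\langle\cdot,\cdot\rangle_P)$ is $({\rm ran}\,|P|^{1/2},\langle\cdot,\cdot\rangle_P)$.

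Finally, for the last identity and the Krein space claim, a direct computation gives
\begin{align*}
\langle|P|^{1/2}f,\sigma|P|^{1/2}g\rangle_P
&=\langle|P|^{1/2}f,|P|^{1/2}(\sigma g)\rangle_P
=\langle f,(I-\pi)\sigma g\rangle_{\mathcal H}\\
&=\langle f,\sigma g\rangle_{\mathcal H}
=\langle\sigma f,(I-\pi)g\rangle_{\mathcal H}
=[|P|^{1/2}f,|P|^{1/2}g]_P,
\end{align*}
using $(I-\pi)\sigma=\sigma$, $\sigma=\sigma^{*}$ and $\sigma\pi=0$. Hence $[\cdot,\cdot]_P=\langle\cdot,J\cdot\rangle_P$ for the operator $J$ on ${\rm ran}\,|P|^{1/2}$ with $J(|P|^{1/2}g)=\sigma|P|^{1/2}g$; conjugating by $V$ identifies $J$ with $\sigma|_{(\ker P)^{\perp}}$, so $J$ is a bounded, $\langle\cdot,\cdot\rangle_P$-self-adjoint involution. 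Decomposing $({\rm ran}\,|P|^{1/2},\langle\cdot,\cdot\rangle_P)$ into the $(+1)$- and $(-1)$-eigenspaces of $J$ yields a fundamental decomposition for $[\cdot,\cdot]_P$, whence $({\rm ran}\,|P|^{1/2},[\cdot,\cdot]_P)$ is a Krein space.
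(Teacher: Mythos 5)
Your proof is correct, and it is essentially the standard argument behind this proposition (which the paper does not prove itself but delegates to the references \cite{a1,zbMATH06526205}): transport everything to $(\ker P)^{\perp}$ via the isometry $f\mapsto|P|^{1/2}f$, obtain completeness of $({\rm ran}\,|P|^{1/2},\langle\cdot,\cdot\rangle_P)$ and density of ${\rm ran}\,P$ there, and exhibit $\sigma$ as the fundamental symmetry linking \eqref{shulamit111} and \eqref{eglantine}. All the spectral-calculus facts you invoke ($\ker|P|^{1/2}=\ker P$, $\sigma\pi=0$, $\sigma^2=I-\pi$, $\sigma$ commuting with $|P|^{1/2}$) are valid, so I see no gap.
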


In preparation for the next result, we recall that the Aronszajn-Moore one-to-one
correspondence between positive definite functions and
reproducing kernel Hilbert spaces (see \cite{aron})
does not extend in a straightforward way to the case of Hermitian functions. As was proved by L. Schwartz, see \cite{schwartz}, there is an onto, but not one-to-one, map from the
set of reproducing kernel Krein spaces of functions on a given set and the set of differences of positive definite functions on this given set.

\begin{definition}
  Let $(\mathcal C,[\cdot,\cdot]_{\mathcal C})$ be a Hilbert space, and let $\mathcal H$ be a Hilbert space of $\mathcal C$-valued functions.
Let $(g_z)_{z\in \Omega}$ be a family of operators from $\mathcal C$ into $\mathcal H$ whose ranges span a dense subspace of $\mathcal H$.
For $f\in \mathcal H$ and $\eta\in\mathcal C$ we define an ``associated transform'' of $f$ denoted by $\widehat{|P|^{1/2}f}$via
\begin{equation}
\label{widehat}
[\widehat{|P|^{1/2}f}(z),\eta]_{\mathcal C}=[ |P|^{1/2}f,P( g_z\eta)]_P=\langle |P|^{1/2}f,g_z\eta\rangle_{\mathcal H}.
\end{equation}
\end{definition}

\begin{proposition}
\label{sar123}
In the setting of the previous definition, the set of functions $\widehat{|P|^{1/2}f}$ defined by \eqref{widehat} with the inner product
\begin{equation}
\label{rtyuiop123}
[\widehat{|P|^{1/2}f},\widehat{|P|^{1/2}g}]_{K}=[|P|^{1/2}f,|P|^{1/2}g]_P
\end{equation}
is a reproducing kernel Krein space with reproducing kernel equal to
\[
K(z,w)\xi=(\widehat{P(g_w\xi)})(z)
\]
\label{z1z2z3}
\end{proposition}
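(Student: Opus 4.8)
The plan is to show that the map $f \mapsto \widehat{|P|^{1/2}f}$ is well defined, that it transports the Krein-space structure on ${\rm ran}\,|P|^{1/2}$ (from Proposition \ref{milano1234}) to the space $K$ of transforms, and that the resulting space has the stated reproducing kernel. First I would verify that \eqref{widehat} genuinely defines an element of the coefficient space $\mathcal C$: for fixed $z$ and fixed $f \in \mathcal H$, the assignment $\eta \mapsto \langle |P|^{1/2}f, g_z\eta\rangle_{\mathcal H}$ is a bounded conjugate-linear (or linear, depending on the convention fixed by \eqref{shulamit111}--\eqref{eglantine}) functional on $\mathcal C$, since $g_z$ is a bounded operator from $\mathcal C$ into $\mathcal H$; Riesz representation then produces a unique vector $\widehat{|P|^{1/2}f}(z) \in \mathcal C$. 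The equality of the two expressions in \eqref{widehat} is exactly identity \eqref{michelle} with $g$ replaced by $g_z\eta$, so nothing new is needed there. I would also record that $\widehat{|P|^{1/2}f}$ depends only on $|P|^{1/2}f$ and not on $f$ itself, because the right-hand side $\langle |P|^{1/2}f, g_z\eta\rangle_{\mathcal H}$ is unchanged if $f$ is altered by an element of $\ker P = \ker |P|^{1/2}$.

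Next I would check that the transform is \emph{injective} on ${\rm ran}\,|P|^{1/2}$: if $\widehat{|P|^{1/2}f} \equiv 0$, then $\langle |P|^{1/2}f, g_z\eta\rangle_{\mathcal H} = 0$ for all $z \in \Omega$ and all $\eta \in \mathcal C$; since the ranges of the $g_z$ span a dense subspace of $\mathcal H$, this forces $|P|^{1/2}f = 0$. Consequently the formula \eqref{rtyuiop123} is a consistent definition of a Hermitian form on the set of transforms: if $\widehat{|P|^{1/2}f} = \widehat{|P|^{1/2}g'}$ as functions, then $|P|^{1/2}f = |P|^{1/2}g'$ and hence $[|P|^{1/2}f,|P|^{1/2}g]_P = [|P|^{1/2}g',|P|^{1/2}g]_P$. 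The transform is then by construction an isometric isomorphism from $({\rm ran}\,|P|^{1/2}, [\cdot,\cdot]_P)$ onto $(K, [\cdot,\cdot]_K)$, and since the former is a Krein space by Proposition \ref{milano1234}, so is the latter: the fundamental decomposition and majorant Hilbert topology transport across the isomorphism.

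It remains to identify the reproducing kernel. Fix $w \in \Omega$ and $\xi \in \mathcal C$; the candidate kernel element is $K(w,\cdot)\xi := \widehat{P(g_w\xi)}$, which lies in $K$ because $P(g_w\xi) = |P|^{1/2}(|P|^{1/2}\sigma g_w\xi) \in {\rm ran}\,|P|^{1/2}$. For arbitrary $f \in \mathcal H$ I would compute, using \eqref{rtyuiop123} and then the right-hand identity in \eqref{widehat} together with \eqref{michelle},
\[
[\widehat{|P|^{1/2}f},\, \widehat{P(g_w\xi)}\,]_K = [\,|P|^{1/2}f,\, P(g_w\xi)\,]_P = \langle |P|^{1/2}f,\, g_w\xi\rangle_{\mathcal H} = [\,\widehat{|P|^{1/2}f}(w),\, \xi\,]_{\mathcal C},
\]
which is precisely the reproducing property for the kernel $K(z,w)\xi = (\widehat{P(g_w\xi)})(z)$. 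Hermitian symmetry $K(z,w) = K(w,z)^{[*]}$ follows from the self-adjointness built into \eqref{rtyuiop123}.

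The step I expect to be the genuine obstacle is the passage from ``Hermitian form'' to ``Krein space'': one must be sure that $({\rm ran}\,|P|^{1/2},[\cdot,\cdot]_P)$ is a bona fide Krein space (not merely an inner product space), and here I would lean squarely on Proposition \ref{milano1234}, which already supplies this together with the compatibility relation between $[\cdot,\cdot]_P$ and $\langle \cdot,\cdot\rangle_P$ via the sign $\sigma$; the majorant Hilbert space is $({\rm ran}\,|P|^{1/2},\langle \cdot,\cdot\rangle_P)$, and the fundamental symmetry is (the transported version of) $\sigma$. The only mild subtlety is bookkeeping of linear versus conjugate-linear slots and the placement of $\sigma$, which the identities \eqref{shulamit111}--\eqref{rtyui} have been set up to handle cleanly.
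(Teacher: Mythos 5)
Your proposal is correct and its core step --- the chain of equalities $[\widehat{|P|^{1/2}f},\widehat{P(g_w\xi)}]_K=[|P|^{1/2}f,P(g_w\xi)]_P=\langle |P|^{1/2}f,g_w\xi\rangle_{\mathcal H}=[\widehat{|P|^{1/2}f}(w),\xi]_{\mathcal C}$ via \eqref{michelle} --- is exactly the paper's proof, which consists of that computation alone. The additional verifications you supply (well-definedness via Riesz representation, injectivity of the transform from the density of the spans of the $g_z$, and transport of the Krein structure from Proposition \ref{milano1234}) are left implicit in the paper and are a sensible, correct completion rather than a different route.
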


\begin{proof}
Using \eqref{michelle} to go from the second line to the third in the following
computations one we can write

\[
\begin{split}
[\widehat{|P|^{1/2}f},K(\cdot,w)\eta]_{K}&=[|P|^{1/2}f,P(g_w\eta)]_P\\
&=[|P|^{1/2}f,|P|^{1/2}(|P|^{1/2}\sigma g_w\eta)]_P\\
&=\langle |P|^{1/2}f, g_w\eta)\rangle_{\mathcal H}\\
&=[ \widehat{|P|^{1/2}f}(w),\eta]_{\mathcal C}.
\end{split}
\]
\end{proof}

We note that
\begin{equation}
\label{esther}
[K(z,w)\xi,\eta]_{\mathcal C}=[P(g_w\xi),P(g_z\eta)]_P
\end{equation}
and, by replacing $f$ by $|P|^{1/2}\sigma f$ in \eqref{rtyuiop123},
\begin{equation}
\label{esther1}
[\widehat{Pf}(z),\eta]_{\mathcal C}=[Pf,P(g_z\eta)]_P=\langle Pf,g_z\eta\rangle_{\mathcal H}.
\end{equation}
These last equalities are used in particular in the proof of Proposition \ref{prop42}.

\section{The complex variable setting: preliminaries}
\setcounter{equation}{0}
\label{sec3}
In this section we introduce a space, denoted by $\mathbf H_{2,r}^-(\mathcal C)$, which turns out to be a reproducing kernel Hilbert space. We equip this space with a suitable sesquilinear form and we prove some useful properties. As in the previous section we consider the case where the coefficient space is a Hilbert space.
The case of a Krein space is treated in Remark \ref{lastrem}.\\
In the sequel, let $(\mathcal C,\langle\cdot,\cdot\rangle_{\mathcal C})$ be a Hilbert space space, and $\|\cdot\|_{\mathcal C}$ denote the associated norm.

\begin{definition} With the above notation,
and with $R=1/r$, $0<r<1$, we denote by
$\mathbf H_{2,r}^-(\mathcal C)$ the space of power series of the form
\begin{equation}
\label{marseille}
  f(z)=\sum_{u=1}^\infty \frac{f_u}{z^u},
\end{equation}
where the coefficients $f_1,f_2\ldots \in\mathcal C$ and satisfy
\begin{equation}
\label{concorde}
\sum_{u=1}^\infty R^{2u}\|f_u\|_{\mathcal C}^2<\infty,
\end{equation}
with associated inner product
\begin{equation}
\label{innerprod}
\langle f,g\rangle_{\mathbf H_{2,r}^-(\mathcal C)}=\sum_{u=1}^\infty R^{2u}\langle f_u,g_u\rangle_{\mathcal C}\quad ( where\,\, g(z)=\sum_{u=1}^\infty\frac{g_u}{z^u}).
\end{equation}
We denote by $\ell_{2,r}(\mathbb N,\mathcal C)$ the space of vectors
\[
{\mathbf f}=\begin{pmatrix}f_1\\ f_2\\ \vdots\end{pmatrix}\in{\mathcal C}^{\mathbb N}
\]
such that \eqref{concorde} holds.
\label{shulamit}
\end{definition}
\begin{proposition}
Elements of $\mathbf H_{2,r}^-(\mathcal C)$ are analytic in $|z|>r$, and $\mathbf H_{2,r}^-(\mathcal C)$ is a reproducing kernel Hilbert space of $\mathcal C$-valued functions
with reproducing kernel
\begin{equation}
k(z,w)=\frac{r^2I_{\mathcal C}}{z\overline{w}-r^2}.
\label{rkz}
\end{equation}
For $\xi\in\mathcal C$ and $\nu\in\mathbb C$ such that $|\nu|<r$ the function
\begin{equation}
u\mapsto \frac{\xi}{u-\nu}
\end{equation}
belongs to $\mathbf H_{2,r}^-(\mathcal C)$.
\end{proposition}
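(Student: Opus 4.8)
The plan is to verify the three assertions in turn, working directly from the definition of $\mathbf H_{2,r}^-(\mathcal C)$ and standard facts about reproducing kernel Hilbert spaces. First, for analyticity: given $f(z)=\sum_{u=1}^\infty f_u/z^u$ with $\sum R^{2u}\|f_u\|_{\mathcal C}^2<\infty$, I would show the series converges absolutely and uniformly on any region $|z|\ge r'>r$. Writing $w=1/z$, so $|w|\le 1/r'<1/r=R$, the series becomes $\sum f_u w^u$, and the coefficient bound $\|f_u\|_{\mathcal C}\le C R^{-u}$ (which follows from summability) shows this is a power series in $w$ with radius of convergence at least $R$; hence it defines an analytic $\mathcal C$-valued function on $|w|<R$, i.e. $f$ is analytic on $|z|>r$.

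Next, for the reproducing kernel property: the map sending $f\in\mathbf H_{2,r}^-(\mathcal C)$ to its coefficient sequence $(f_u)_{u\ge1}$ is, by definition, a unitary isomorphism onto $\ell_{2,r}(\mathbb N,\mathcal C)$, which is a Hilbert space (a weighted $\ell_2$ space); this transfers completeness and hence shows $\mathbf H_{2,r}^-(\mathcal C)$ is a Hilbert space. To identify the kernel I would compute, for fixed $w$ with $|w|>r$ and $\xi\in\mathcal C$, the function $k(\cdot,w)\xi$ with $k$ as in \eqref{rkz}, expand it as a power series in $1/z$, and check both that it lies in $\mathbf H_{2,r}^-(\mathcal C)$ and that $\langle f,k(\cdot,w)\xi\rangle=\langle f(w),\xi\rangle_{\mathcal C}$. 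Concretely, $\frac{r^2}{z\overline w - r^2}=\frac{r^2}{z\overline w}\cdot\frac{1}{1-r^2/(z\overline w)}=\sum_{u=1}^\infty \frac{(r^2)^u}{(\overline w)^u}\cdot\frac{1}{z^u}$, valid for $|z\overline w|>r^2$ (in particular for $|z|,|w|>r$); so the $u$-th coefficient of $k(\cdot,w)\xi$ is $(r^{2u}/\overline w^{\,u})\xi$. Then
\[
\langle f,k(\cdot,w)\xi\rangle_{\mathbf H_{2,r}^-(\mathcal C)}=\sum_{u=1}^\infty R^{2u}\left\langle f_u,\frac{r^{2u}}{\overline w^{\,u}}\xi\right\rangle_{\mathcal C}=\sum_{u=1}^\infty \left\langle \frac{f_u}{w^u},\xi\right\rangle_{\mathcal C}=\langle f(w),\xi\rangle_{\mathcal C},
\]
using $R^{2u}r^{2u}=1$; one checks along the way that $\sum R^{2u}\|r^{2u}\xi/\overline w^{\,u}\|^2=\|\xi\|^2\sum (r^2/|w|^2)^u<\infty$ since $|w|>r$, so $k(\cdot,w)\xi\in\mathbf H_{2,r}^-(\mathcal C)$.

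Finally, for the last assertion: for $|\nu|<r$ and $\xi\in\mathcal C$, expand $\frac{\xi}{u-\nu}=\frac{\xi}{u}\cdot\frac{1}{1-\nu/u}=\sum_{k=1}^\infty \frac{\nu^{k-1}\xi}{u^{k}}$, valid for $|u|>|\nu|$; the coefficient of $u^{-k}$ is $\nu^{k-1}\xi$, and then $\sum_{k=1}^\infty R^{2k}\|\nu^{k-1}\xi\|_{\mathcal C}^2=\|\xi\|_{\mathcal C}^2 R^2\sum_{k=1}^\infty (R|\nu|)^{2(k-1)}$, which converges because $R|\nu|=|\nu|/r<1$. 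Hence the function lies in $\mathbf H_{2,r}^-(\mathcal C)$. None of these steps presents a real obstacle; the only point requiring a little care is tracking the domains of validity of the geometric-series expansions (ensuring $|z\overline w|>r^2$, respectively $|u|>|\nu|$, hold throughout the relevant region) and confirming the interchange of summation in the inner-product computation, which is justified by the absolute convergence established from the weighted $\ell_2$ bounds.
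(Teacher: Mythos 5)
Your proposal is correct and follows essentially the same route as the paper: expanding $k(\cdot,w)\xi$ as the geometric series $\sum_{u\ge1}(r^{2u}/\overline{w}^{\,u})z^{-u}$, pairing it with $f$ via the weighted inner product using $R^{2u}r^{2u}=1$, and expanding $\xi/(u-\nu)$ in powers of $1/u$ to check the summability condition. The extra details you supply (analyticity via the substitution $w=1/z$, completeness via the unitary identification with $\ell_{2,r}(\mathbb N,\mathcal C)$, and membership of $k(\cdot,w)\xi$ in the space) are left implicit in the paper but are consistent with it.
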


\begin{proof}
Taking into account the power expansion
\begin{equation}
k(z,w)=\frac{r^2I_{\mathcal C}}{z\overline{w}-r^2}=\frac{r^2I_{\mathcal C}}{z\overline{w}}\frac{1}{1-\frac{r^2}{z\overline{w}}}=\sum_{u=1}^\infty\frac{r^{2u}}{\overline{w}^u}\frac{1}{z^u}
\label{rkz1}
\end{equation}
and using the definition \eqref{innerprod} of the inner product we have for $f$ of the form \eqref{marseille} and $\eta\in\mathcal C$:
\[
\begin{split}
  \langle f(\cdot),k(\cdot,w)\eta \rangle_{\mathbf H_{2,r}^-(\mathcal C)}&=\sum_{u=1}^\infty R^{2u}\langle f_u,\eta\rangle_{\mathcal C}
  \frac{r^{2u}}{\overline{w}^u}\\
&=\langle f(w),\eta\rangle_{\mathcal C}.
\end{split}
\]

For $|z|>r$ we have
\[
\frac{1}{z-\nu}=\frac{1}{z(1-\frac{\nu}{z})}=\sum_{u=0}^\infty\frac{\nu^{u}}{z^{u+1}},
\]
where $f_u=\nu^{u-1}$ for $u=1,2,\ldots$ and
\[
\sum_{u=1}^\infty R^{2u}|\nu|^{2(u-1)}=R^2\cdot\sum_{u=0}|\frac{\nu}{r}|^{2u}<\infty .
\]
\end{proof}

Set now $1>r_0>r$.
We consider $\Phi$ a $\mathbf L(\mathcal C,\mathcal C)$-valued function analytic in $\mathbb D_{r_0}$
and continuous in $|z|\le r_0$ in the operator topology, and we define
$C_\Phi$ as in \eqref{huluberlu}:
\[
C_\Phi(u,v)=\frac{\Phi(u)+\Phi(v)^*}{1-u\overline{v}}
\]
where both $|u|\le r_0$ and $|v|\le r_0$. If we set
\[
M=\max_{|z|\le r_0}\|\Phi(z)\|,
\]
then
\[
\|C_{\Phi}(u,v)\|\le \frac{2M}{1-r_0^2}.
\]

For a fixed function $\Phi$ as above we define a sesquilinear form on
$\mathbf H_{2,r}^-(\mathcal C)$ by:
\begin{equation}
[f,g]_{\Phi}=\frac{1}{4\pi^2}\iint_{\substack{|a|=r\\|b|=r}}\left[C_\Phi(a,b)f(a),g(b)\right]_{\mathcal C}dad\overline{b}.
\label{phedre}
\end{equation}
For similar forms, see \cite[p. 133]{MR903068} and \cite[p. 1199]{a2}. We now study the properties of this form needed to prove the realization result in the next section.

\begin{proposition}
Let $f,g\in\mathbf H_{2,r}^-(\mathcal C)$ equipped with the sesquilinear form
\eqref{phedre}. It holds that
\begin{equation}
|[f,g]_{\Phi}|\le
 \frac{2Mr_0^2}{(1-r_0^2)^2}\left(\sum_{u=1}^\infty R^{2n}\|f_u\|^2\right)^{1/2}\left(\sum_{u=1}^\infty R^{2u}\|g_u\|^2\right)^{1/2}
\label{bois_de_vincennes}
\end{equation}
and in particular $[\cdot,\cdot]_{\Phi}$ is jointly continuous with respect to the topology of $\mathbf H_{2,r}^-(\mathcal C)$.
\end{proposition}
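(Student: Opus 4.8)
The plan is to estimate the double integral \eqref{phedre} directly, using the uniform bound on $\|C_\Phi\|$ recorded just above the statement together with Parseval's identity on the circle $|z|=r$; the power series of $\Phi$ is not needed.

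First I would set up the one-variable estimate. Since $\sum_{u\ge1}R^{2u}\|f_u\|_{\mathcal C}^2<\infty$ (with $R=1/r$), the function $\theta\mapsto f(re^{i\theta})=\sum_{u\ge1}r^{-u}f_ue^{-iu\theta}$ is a well-defined element of $L^2([0,2\pi],\mathcal C)$, and orthogonality of the exponentials gives the Parseval identity
\[
\frac{1}{2\pi}\int_0^{2\pi}\|f(re^{i\theta})\|_{\mathcal C}^2\,d\theta=\sum_{u\ge1}r^{-2u}\|f_u\|_{\mathcal C}^2=\|f\|_{\mathbf H_{2,r}^-(\mathcal C)}^2,
\]
and similarly for $g$; by Cauchy--Schwarz on $L^2([0,2\pi])$ this yields $\int_0^{2\pi}\|f(re^{i\theta})\|_{\mathcal C}\,d\theta\le 2\pi\,\|f\|_{\mathbf H_{2,r}^-(\mathcal C)}$. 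Since $C_\Phi$ is bounded and continuous on $\{|a|\le r_0\}\times\{|b|\le r_0\}$, this also shows that the integrand in \eqref{phedre} is absolutely integrable.

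Next I would parametrize $a=re^{i\theta}$ and $b=re^{i\phi}$, so that $|da|=r\,d\theta$ and $|d\overline b|=r\,d\phi$, and bound the integrand pointwise by
\[
|[C_\Phi(a,b)f(a),g(b)]_{\mathcal C}|\le\|C_\Phi(a,b)\|\,\|f(a)\|_{\mathcal C}\,\|g(b)\|_{\mathcal C}\le\frac{2M}{1-r_0^2}\,\|f(a)\|_{\mathcal C}\,\|g(b)\|_{\mathcal C},
\]
using the estimate on $\|C_\Phi\|$ from the paragraph preceding the statement. The double integral then factors, and together with the previous step,
\[
|[f,g]_\Phi|\le\frac{1}{4\pi^2}\cdot\frac{2M}{1-r_0^2}\cdot r^2\Big(\int_0^{2\pi}\|f(re^{i\theta})\|\,d\theta\Big)\Big(\int_0^{2\pi}\|g(re^{i\phi})\|\,d\phi\Big)\le\frac{2Mr^2}{1-r_0^2}\,\|f\|_{\mathbf H_{2,r}^-(\mathcal C)}\,\|g\|_{\mathbf H_{2,r}^-(\mathcal C)}.
\]
Because $0<r<r_0<1$ gives $r^2<r_0^2$ and $1-r_0^2<1$, the last constant is at most $\tfrac{2Mr_0^2}{(1-r_0^2)^2}$, which establishes \eqref{bois_de_vincennes}. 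Joint continuity is then immediate: if $f_n\to f$ and $g_n\to g$ in $\mathbf H_{2,r}^-(\mathcal C)$, then $|[f_n,g_n]_\Phi-[f,g]_\Phi|\le C\big(\|f_n-f\|\,\|g_n\|+\|f\|\,\|g_n-g\|\big)\to0$.

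There is no serious obstacle here. The one point deserving care is that a series in $\mathbf H_{2,r}^-(\mathcal C)$ need not converge pointwise on $|z|=r$, so the evaluations $f(re^{i\theta})$, $g(re^{i\phi})$ and the use of Parseval above should be read in the $L^2([0,2\pi],\mathcal C)$ sense; square-summability of the coefficients against $r^{-2u}$ is exactly what makes this legitimate. If one prefers to stay within classical contour integration, one may first deform both circles to radius $r'\in(r,r_0)$ --- where $f$ and $g$ are genuinely analytic and the integrand is analytic on the closed annulus, so the value of \eqref{phedre} is unchanged --- run the same estimate there, and let $r'\downarrow r$; this introduces no new difficulty, and in fact yields a constant no larger than the one stated.
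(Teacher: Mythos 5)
Your proof is correct, and while it follows the same overall strategy as the paper's (bound $\|C_\Phi(a,b)\|$ uniformly by $2M/(1-r_0^2)$, then reduce \eqref{phedre} to the product of the $L^1$-means of $\|f\|$ and $\|g\|$ on $|z|=r$), the key coefficient estimate is carried out differently. The paper majorizes $\|f(a)\|$ pointwise on the circle by a coefficient sum and then applies Cauchy--Schwarz to the coefficients after splitting the weights; you instead use the Parseval identity $\tfrac{1}{2\pi}\int_0^{2\pi}\|f(re^{i\theta})\|_{\mathcal C}^2\,d\theta=\sum_u R^{2u}\|f_u\|_{\mathcal C}^2$ followed by Cauchy--Schwarz in $L^2(d\theta)$. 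Your route buys two things. First, it cleanly handles the boundary-value issue that the paper glosses over: for $f\in\mathbf H_{2,r}^-(\mathcal C)$ the majorant $\sum_u R^u\|f_u\|_{\mathcal C}$ may be infinite even when \eqref{concorde} holds (take $\|f_u\|=r^u/u$), so the series need not converge absolutely on $|z|=r$ and a pointwise sup bound there is problematic -- indeed the paper's displayed bound uses $r^u$ where one would expect $R^u$, and it is precisely the $L^2$ reading of the boundary values, which you make explicit, that \eqref{concorde} legitimizes. Second, you arrive at the sharper constant $2Mr^2/(1-r_0^2)$, which is at most the stated $2Mr_0^2/(1-r_0^2)^2$ since $r<r_0<1$, so \eqref{bois_de_vincennes} follows. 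The contour-deformation variant you sketch (moving both circles to radius $r'\in(r,r_0)$, where everything is genuinely analytic, and letting $r'\downarrow r$) is also sound, since the integrand is analytic in $a$ and in $\overline{b}$ on the relevant annuli; it is a perfectly acceptable alternative to the $L^2$ boundary-value reading.
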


\begin{proof}
\[
\begin{split}
|[f,g]_{\Phi}|&\le\frac{2M}{1-r_0^2}\left(\int_{|a|=r}\|f(a)\|da\right)\left(\int_{|b|=r}\|g(b)\|db\right)\\
&\le\frac{2M}{1-r_0^2}\left(\int_{|a|=r}\sum_{u=1}^\infty r^u\|f_u\|\right)\left(\int_{|b|=r}\sum_{u=1}^\infty r^u\|g_u\|\right).
\end{split}
\]
But
\[
\begin{split}
\sum_{u=1}^\infty r^u\|f_u\|&=\sum_{u=1}^\infty r^{2u}R^u\|f_u\|\\
&\le \left(\sum_{u=1}^\infty r^{4n}\right)^{1/2}\left(\sum_{u=1}^\infty R^{2u}\|f_u\|^2\right)^{1/2}\\
&\le \left(\sum_{u=1}^\infty r_0^{4u}\right)^{1/2}\left(\sum_{u=1}^\infty R^{2u}\|f_u\|^2\right)^{1/2}
\end{split}
\]
and similarly for $g$. This concludes the proof.
\end{proof}

In view of the preceding proposition,
Riesz representation theorem allows to define in a unique way an Hermitian and everywhere defined operator $P$ such that
\begin{equation}
\label{quai-de-la-rapee}
[f,g]_{\Phi}=\langle Pf,g\rangle_{\mathbf H_{2,r}^-(\mathcal C)}=[Pf,Pg]_P,
\end{equation}
The continuity of $P$ follows from a well known result from functional analysis; see e.g \cite[Exercise 4.2.19 p. 212]{CAPB_2}.

Note that
\begin{equation}
(Pf)({b})=\frac{-r^2}{2\pi i b}\int_{|a|=r}C_\Phi(a,b)f(a)da
\label{richelieu_drouot}
\end{equation}
\begin{proposition}
Let $P$ given by \eqref{richelieu_drouot}.
Then,
\begin{equation}
\langle Pf,g\rangle_{\mathbf H_{2,r}^-(\mathcal C)}=\frac{1}{4\pi^2}\iint_{\substack{|a|=r\\|b|=r}}\langle C_\Phi(a,b)f(a),g(b)\rangle_{\mathcal C}dad\overline{b}.
\end{equation}
Furthermore, it holds that
\begin{equation}
\label{johanna}
\left(P\left(\frac{\xi}{a-\overline{w}}\right)\right)(b)=\frac{-r^2}{b}\frac{\Phi({b})^*+\Phi(\overline{w})}{1-\overline{b}\overline{w}}\xi,\quad\xi\in\mathcal C, \quad |b|=r.
\end{equation}
\end{proposition}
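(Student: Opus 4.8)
The plan is to establish the two assertions separately: the first by reformulating the $\mathbf H_{2,r}^-(\mathcal C)$-inner product as a boundary integral and applying Fubini, the second by a single residue computation in the variable $a$.

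For the first identity I would use that, for $h=\sum_{u\ge 1}h_uz^{-u}$ and $g=\sum_{u\ge 1}g_uz^{-u}$ in $\mathbf H_{2,r}^-(\mathcal C)$, Definition \ref{shulamit} together with the orthogonality relations $\frac{1}{2\pi}\int_0^{2\pi}e^{-i(u-v)\theta}\,d\theta=\delta_{uv}$ gives
\[
\langle h,g\rangle_{\mathbf H_{2,r}^-(\mathcal C)}=\sum_{u=1}^\infty R^{2u}\langle h_u,g_u\rangle_{\mathcal C}=\frac{1}{2\pi}\int_0^{2\pi}\langle h(re^{i\theta}),g(re^{i\theta})\rangle_{\mathcal C}\,d\theta ,
\]
i.e. the inner product is the $L^2$-pairing of the traces on $|z|=r$. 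Taking $h=Pf$ and substituting \eqref{richelieu_drouot}, one obtains a double integral over $|a|=r$ and $|b|=r$; the interchange of the two integrations is legitimate because $f$ and $g$ are continuous on the compact circles $|a|=r$, $|b|=r$ while $\|C_\Phi(a,b)\|\le 2M/(1-r_0^2)$ there. On $|b|=r$ one has $\overline b=r^2/b$, which converts the line element $db/b$ coming from \eqref{richelieu_drouot} into $d\overline b$ in the orientation used in \eqref{phedre}; this produces exactly $\frac{1}{4\pi^2}\iint_{|a|=r,\,|b|=r}\langle C_\Phi(a,b)f(a),g(b)\rangle_{\mathcal C}\,da\,d\overline b$. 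One may alternatively verify the identity coefficientwise, testing \eqref{quai-de-la-rapee} against $g(z)=\eta z^{-u}$, $\eta\in\mathcal C$, to compute $(Pf)_u$ and resumming; in either formulation the identity amounts (since $\mathcal C$ is a Hilbert space, so $[\cdot,\cdot]_{\mathcal C}=\langle\cdot,\cdot\rangle_{\mathcal C}$) to the reconciliation of the integral operator \eqref{richelieu_drouot} with the Riesz operator of \eqref{quai-de-la-rapee}.

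For \eqref{johanna} I would first note that $a\mapsto\xi/(a-\overline w)$ belongs to $\mathbf H_{2,r}^-(\mathcal C)$ by the proposition following Definition \ref{shulamit}, since $|\overline w|=|w|<r$. Substituting $f(a)=\xi/(a-\overline w)$ into \eqref{richelieu_drouot} yields
\[
\left(P\Bigl(\frac{\xi}{a-\overline w}\Bigr)\right)(b)=\frac{-r^2}{2\pi i\,b}\int_{|a|=r}\frac{\Phi(a)+\Phi(b)^*}{1-a\overline b}\cdot\frac{\xi}{a-\overline w}\,da ,
\]
and I would evaluate the $a$-integral by the residue theorem. The key point is that, inside $|a|<r$, the integrand has the single simple pole $a=\overline w$: indeed $\Phi$ is analytic on $\mathbb{D}_{r_0}\supset\overline{\mathbb{D}_r}$, $\Phi(b)^*$ does not depend on $a$, and the pole of $(1-a\overline b)^{-1}$ lies at $a=1/\overline b$, with $|1/\overline b|=1/r>r$, hence outside the contour. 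The residue at $a=\overline w$ is $(\Phi(\overline w)+\Phi(b)^*)\xi/(1-\overline w\,\overline b)$, so the integral equals $2\pi i$ times this; the factor $2\pi i\,b$ in the denominator cancels and reproduces precisely the right-hand side of \eqref{johanna}.

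I expect the only delicate point to be in the first part — making rigorous the passage between \eqref{richelieu_drouot}, whose right-hand side is a priori only a function on the circle $|b|=r$ rather than manifestly the trace of an element of $\mathbf H_{2,r}^-(\mathcal C)$, and the double integral: this requires care with the Fubini step and with the orientation conventions hidden in the symbol $da\,d\overline b$ of \eqref{phedre}. Once the pole at $a=1/\overline b$ has been placed outside $|a|=r$, the residue computation giving \eqref{johanna} is entirely routine.
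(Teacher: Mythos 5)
Your proposal is correct and follows essentially the same route as the paper: the first identity is obtained by writing the $\mathbf H_{2,r}^-(\mathcal C)$-inner product as the $L^2$-pairing on the circle $|b|=r$, substituting \eqref{richelieu_drouot}, interchanging the integrals, and using $\overline{b}=r^2/b$; the second is the paper's Cauchy-formula computation, which you merely spell out as a residue calculation with the observation that the pole at $a=1/\overline{b}$ lies outside $|a|=r$.
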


\begin{proof}
We have
\[
\begin{split}
\langle Pf,g\rangle_{\mathbf H_{2,r}^-(\mathcal C)}&=\frac{1}{2\pi i}\int_{|b|=r}[Pf(b),g(b)]_{\mathcal C}\frac{db}{b}\\
&=\frac{r^2}{4\pi^2  }\int_{|b|=r}\frac{1}{b}\left[\int_{|a|=r}C_\Phi(a,b)f(a)da,g(b)\right]_{\mathcal C}\frac{db}{b}\\
&=\frac{r^2}{4\pi^2  }\iint_{\substack{|a|=r\\|b|=r}}\left[C_\Phi(a,b)f(a),g(b)\right]_{\mathcal C}\frac{dadb}{b^2},
\end{split}
\]
where continuity justifies the interchanging of inner product and integral to go from the second to the third line above.
The result follows from
\[
d\overline{b}=r^2\frac{db}{b^2}.
\]
To prove the second claim, we write
\[
\begin{split}
\left(P\left(\frac{\xi}{a-\overline{w}}\right)\right)(b)&=-\frac{r^2}{2\pi i b}\int_{|a|=r}\frac{\Phi(a)\xi+\Phi(b)^*\xi}{(1-a\overline{b})(a-\overline{w})} da\\
\end{split}
\]
and the result follows from Cauchy's formula.
\end{proof}

The range of $P$ is inside $\mathbf H_{2,r}^-(\mathcal C)$, and so the right side
of \eqref{johanna} is the restriction to $|b|=r$ of a function in the variable $b$
in  $\mathbf H_{2,r}^-(\mathcal C)$. To verify this directly we note that
\[
\frac{\Phi({b})^{*}+\Phi(\overline{w})}{1-\overline{b}\overline{w}},\quad |b|=r,
\]
is a power series in $\overline{b}$, with possibly a constant term.  Note that $\overline{b}=r^2/b$ has modulus strictly less than $r$ for $|b|>r$, and so
$\Phi(b)^*$ extend to a function analytic in $|b|>r$. Together with the factor $-r^2/b$ in front of the right side of \eqref{johanna} this leads to the conclusion.\\

\begin{proposition}
Let $f,g\in\mathbf H_{2,r}^-(\mathcal C)$. It holds that
\begin{equation}
\label{elizabeth}
\begin{split}
\iint_{\substack{|a|=r\\|b|=r}}
\left[C_\Phi(a,b)(af(a)-f_1),g(b)\right]_{\mathcal C}dad\overline{b}&=\\
&\hspace{-5cm}=
\iint_{\substack{|a|=r\\|b|=r}}\left[C_\Phi(a,b)f(a),b^{-1}g(b)\right]_{\mathcal C}da
d\overline{b},
\end{split}
\end{equation}
or, with some abuse of notation,
\begin{equation}
[ (af(a)-f_1),g(b)]_{\Phi}=[f(a),b^{-1}g(b)]_{\Phi}.
\end{equation}
\end{proposition}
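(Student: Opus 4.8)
The plan is to prove \eqref{elizabeth} by reducing both double integrals to iterated contour integrals in the single variable $a$ (with $b$ fixed on $|b|=r$) and invoking Cauchy's theorem, the decisive algebraic input being a partial fraction decomposition of $a/(1-a\overline b)$. First I would expand the left-hand side by sesquilinearity of $[\cdot,\cdot]_{\mathcal C}$ together with the explicit form of $C_\Phi$, writing $[C_\Phi(a,b)(af(a)-f_1),g(b)]_{\mathcal C}=a\,[C_\Phi(a,b)f(a),g(b)]_{\mathcal C}-[C_\Phi(a,b)f_1,g(b)]_{\mathcal C}$, and likewise moving the scalar $b^{-1}$ out of the second slot on the right-hand side. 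As a function of $a$, the integrand $[C_\Phi(a,b)f_1,g(b)]_{\mathcal C}$ is $\dfrac{\Phi(a)+\Phi(b)^{*}}{1-a\overline b}$ applied to the constant vector $f_1$; for $|b|=r$ this is analytic in $|a|<r_0$, hence on $|a|\le r$, since $\Phi$ is analytic in $|a|<r_0$ with $r_0>r$ and the pole $a=1/\overline b$ has modulus $1/r>1>r$. Hence $\int_{|a|=r}C_\Phi(a,b)f_1\,da=0$ by Cauchy's theorem, and the $f_1$-term drops out.

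For the surviving term I would use the identity $\dfrac{a}{1-a\overline b}=\dfrac{1}{\overline b}\Bigl(\dfrac{1}{1-a\overline b}-1\Bigr)$, valid since $|a||b|=r^2<1$, which gives $a\,C_\Phi(a,b)=\overline b^{\,-1}C_\Phi(a,b)-\overline b^{\,-1}(\Phi(a)+\Phi(b)^{*})$. The first summand, reintegrated, is exactly $\iint[C_\Phi(a,b)f(a),b^{-1}g(b)]_{\mathcal C}\,da\,d\overline b$, i.e.\ the right-hand side of \eqref{elizabeth}; what remains is the term $-\iint \overline b^{\,-1}[(\Phi(a)+\Phi(b)^{*})f(a),g(b)]_{\mathcal C}\,da\,d\overline b$, which I claim vanishes. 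To see this I would carry out the $a$-integration first: because $f(a)=\sum_{u\ge1}f_u/a^{u}$, only the $a^{-1}$-coefficient of the Laurent expansion of $(\Phi(a)+\Phi(b)^{*})f(a)$ in the annulus $r<|a|<r_0$ survives; substituting the result back, together with the prefactor $\overline b^{\,-1}$ and $d\overline b=r^{2}\,db/b^{2}$ on $|b|=r$, one checks that all residues that occur carry powers $b^{-k}$ with $k\ge2$, so the $b$-integration returns zero. Assembling the pieces yields \eqref{elizabeth}, and the second displayed equation is then a restatement via \eqref{phedre}.

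If one would rather not manipulate generic series, the identity can first be checked on the monomials $f=\xi/a^{m}$, $g=\eta/b^{n}$ ($\xi,\eta\in\mathcal C$, $m,n\ge1$), where each step above is a genuine residue computation and both $f\mapsto af-f_1$ and $g\mapsto b^{-1}g$ act boundedly on $\mathbf H_{2,r}^-(\mathcal C)$; the general case then follows by linearity and the joint continuity of $[\cdot,\cdot]_{\Phi}$ established in \eqref{bois_de_vincennes}. The main obstacle is precisely the last vanishing assertion: one has to keep careful track of the conjugations that the $\mathcal C$-inner product imposes on the scalar factors, of the substitution $\overline b=r^{2}/b$ valid only on the circle $|b|=r$, and of the interplay of the $f_1$-correction with the remainder, in order to confirm that nothing extra is contributed to \eqref{elizabeth}.
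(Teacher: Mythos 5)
Your strategy coincides with the paper's: split off the $f_1$-term, kill it with Cauchy's theorem, use $\frac{a}{1-a\overline b}=\frac{1}{\overline b}\bigl(\frac{1}{1-a\overline b}-1\bigr)$ to produce the right-hand side of \eqref{elizabeth} plus a remainder, and argue that the remainder
\[
-\iint_{\substack{|a|=r\\|b|=r}}\overline b^{\,-1}\bigl[(\Phi(a)+\Phi(b)^{*})f(a),g(b)\bigr]_{\mathcal C}\,da\,d\overline b
\]
vanishes. The gap is in that last vanishing claim. The $a$-integration picks out the coefficient of $a^{-1}$ in $(\Phi(a)+\Phi(b)^{*})f(a)$, namely $\sum_{s\ge0}\Phi_s f_{s+1}+\Phi(b)^{*}f_1$. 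The first summand is constant in $b$ and indeed integrates to zero against $\overline b^{\,-1}g(b)$, but the second does not: $\Phi(b)^{*}=\sum_{s\ge0}\Phi_s^{*}\overline b^{\,s}$ supplies nonnegative powers of $\overline b$ which, combined with the $\overline b^{\,-1-u}$ coming from the prefactor and from $g$, hit the exponent $-1$ exactly when $s=u$. The remainder therefore equals $-4\pi^{2}\sum_{u\ge1}\langle\Phi_u^{*}f_1,g_u\rangle_{\mathcal C}$, not $0$. Your own (correct) observation that only the $a^{-1}$-coefficient survives should have flagged this: it is precisely the fact that $\frac{1}{2\pi i}\int_{|a|=r}f(a)\,da=f_1\neq0$ that feeds the nonzero cross term.

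In fact \eqref{elizabeth} fails as stated. Take $\mathcal C=\mathbb C$, $\Phi(z)=z$, $f(a)=1/a$, $g(b)=1/b$: then $af(a)-f_1\equiv0$, so the left side is $0$, whereas the right side is $\iint\frac{a+\overline b}{a\,\overline b^{\,2}(1-a\overline b)}\,da\,d\overline b=4\pi^{2}$ (the residue in $a$ at the origin is $\overline b$). Equivalently, the coefficient formula \eqref{sofsoi} gives $[af(a)-f_1,g]_{\Phi}=[f,b^{-1}g]_{\Phi}-\sum_{u\ge1}\langle\Phi_u^{*}f_1,g_u\rangle_{\mathcal C}$. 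For what it is worth, the paper's own proof breaks at the same point: it rests on the asserted identity $\int_{|a|=r}f(a)\,da=\int_{|a|=r}\frac{f_1}{1-a\overline b}\,da$, whose left side is $2\pi i f_1$ while its right side is $0$ by Cauchy's theorem. So your argument reproduces the paper's route, including its flaw; a correct version of the statement must carry the extra term $\sum_{u\ge1}\langle\Phi_u^{*}f_1,g_u\rangle_{\mathcal C}$, or an additional hypothesis forcing that term to vanish.
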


\begin{proof} We have:
\[
\begin{split}
4\pi^2[ (af(a)-f_1),g(b)]_{\Phi}-[f(a),b^{-1}g(b)]_{\Phi}&=\\
&\hspace{-4cm}=\iint_{\substack{|a|=r\\|b|=r}}
\left(a-\frac{1}{\overline{b}}\right)[C_\Phi(a,b)f(a),g(b)]_{\mathcal C}dad\overline{b}\\
&\hspace{-3.5cm}-\iint_{\substack{|a|=r\\|b|=r}}[C_\Phi(a,b)f_1,b^{-1}g(b)]_{\mathcal C}dad\overline{b}\\
&\hspace{-4cm}=\iint_{\substack{|a|=r\\|b|=r}}[
\left(\Phi(a)+\Phi(b)^*\right)f(a),b^{-1}g(b)]_{\mathcal C}dad\overline{b}\\
&\hspace{-3.5cm}-\iint_{\substack{|a|=r\\|b|=r}}[C_\Phi(a,b)f_1,b^{-1}g(b)]_{\mathcal C}dad\overline{b}\\
&\hspace{-4cm}=[\int_{|a|=r}\Phi(a)f(a)da,\int_{|b|=r}b^{-1}g(b)d\overline{b}]_{\mathcal C}+\\
&\hspace{-3.5cm}+[\int_{|a|=r}f(a)da,\int_{|b|=r}\Phi(b)b^{-1}g(b)d\overline{b}]_{\mathcal C}-\\
&\hspace{-3.5cm}-[\int_{|a|=r}\frac{\Phi(a)f_1}{1-a\overline{b}}da,\int_{|b|=r}b^{-1}g(b)d\overline{b}]_{\mathcal C}-\\
&\hspace{-3.5cm}-[\int_{|a|=r}\frac{f_1}{1-a\overline{b}}da,\int_{|b|=r}\Phi(b)b^{-1}g(b)d\overline{b}]_{\mathcal C}\\
&\hspace{-4cm}=0
\end{split}
\]
since
\[
\int_{|a|=r}f(a)da=\int_{|a|=r}\frac{f_1}{1-a\overline{b}}da\quad{\rm and}\quad \int_{|b|=r}b^{-1}g(b)d\overline{b}=0.
\]
\end{proof}

\begin{corollary}
Let  $T$ be the operator which to $f\in\mathbf H_{2,r}^-(\mathcal C)$ associates the function
\begin{equation}
\label{TTT}
a\,\mapsto\,af(a)-f_1,
\end{equation}
and let $\mathsf{M}_{b^{-1}}$ be the operator of multiplication by $b^{-1}$.
Then $T$ and $\mathsf{M}_{b^{-1}}$ are bounded operators and we have
\begin{equation}
\label{muriel}
\langle PTf,g\rangle_{{\mathbf H}_{2,r}^-(\mathcal C)}=\langle Pf,\mathsf{M}_{b^{-1}}g\rangle_{{\mathbf H}_{2,r}^-(\mathcal C)}.
\end{equation}
\end{corollary}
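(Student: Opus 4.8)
The plan is to establish the three claims of the Corollary in turn: boundedness of $T$, boundedness of $\mathsf{M}_{b^{-1}}$, and the identity \eqref{muriel}. The boundedness of $\mathsf{M}_{b^{-1}}$ is immediate from the description of the space: if $g(z)=\sum_{u\ge 1}g_u/z^u$ then $b^{-1}g(b)=\sum_{u\ge 1}g_u/b^{u+1}$, whose norm in $\mathbf H_{2,r}^-(\mathcal C)$ is $\sum_{u\ge 1}R^{2(u+1)}\|g_u\|^2=R^2\sum_{u\ge 1}R^{2u}\|g_u\|^2=R^2\|g\|^2$, so $\mathsf{M}_{b^{-1}}$ is bounded with norm $R$. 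For $T$, writing $f(a)=\sum_{u\ge 1}f_u/a^u$ gives $af(a)-f_1=\sum_{u\ge 1}f_{u+1}/a^u$, i.e. $T$ is the backward shift on the coefficient sequence; its norm in $\mathbf H_{2,r}^-(\mathcal C)$ is $\sum_{u\ge 1}R^{2u}\|f_{u+1}\|^2=r^2\sum_{u\ge 2}R^{2u}\|f_u\|^2\le r^2\|f\|^2$, so $T$ is bounded with norm at most $r$.

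For the identity \eqref{muriel}, I would simply combine the previous two results. By \eqref{quai-de-la-rapee}, $\langle PTf,g\rangle_{\mathbf H_{2,r}^-(\mathcal C)}=[Tf,g]_\Phi$ and $\langle Pf,\mathsf{M}_{b^{-1}}g\rangle_{\mathbf H_{2,r}^-(\mathcal C)}=[f,\mathsf{M}_{b^{-1}}g]_\Phi$. But $Tf$ is exactly the function $a\mapsto af(a)-f_1$ and $\mathsf{M}_{b^{-1}}g$ is exactly the function $b\mapsto b^{-1}g(b)$, so Proposition \ref{elizabeth} (the identity just proved, $[(af(a)-f_1),g(b)]_\Phi=[f(a),b^{-1}g(b)]_\Phi$) gives $[Tf,g]_\Phi=[f,\mathsf{M}_{b^{-1}}g]_\Phi$, and the two sides of \eqref{muriel} agree. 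One has to be slightly careful that $Tf$ genuinely lies in $\mathbf H_{2,r}^-(\mathcal C)$ so that $[Tf,g]_\Phi$ is defined and equals $\langle PTf,g\rangle$, but this is precisely the boundedness of $T$ established above (the backward shift clearly preserves the space).

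I do not expect any real obstacle here: the corollary is a packaging of Proposition \ref{elizabeth} together with the trivial observation that the two operators involved are bounded on $\mathbf H_{2,r}^-(\mathcal C)$. The only mild subtlety is bookkeeping — making sure the ``abuse of notation'' in \eqref{elizabeth} is read correctly, namely that the symbol $af(a)-f_1$ in the form $[\cdot,\cdot]_\Phi$ refers to the image function $Tf$ evaluated at the integration variable, and similarly $b^{-1}g(b)$ refers to $\mathsf{M}_{b^{-1}}g$. Once that identification is made explicit, \eqref{muriel} is a one-line consequence of \eqref{quai-de-la-rapee} and Proposition \ref{elizabeth}.
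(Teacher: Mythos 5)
Your proof is correct and follows essentially the same route as the paper, which simply states that the identity \eqref{muriel} follows from \eqref{quai-de-la-rapee} together with the preceding proposition. Your explicit verification that $T$ and $\mathsf{M}_{b^{-1}}$ are bounded (with norms at most $r$ and exactly $R$ respectively) is a worthwhile addition that the paper leaves implicit, and your coefficient computations are accurate.
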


\begin{proof}
The assertion follows from \eqref{quai-de-la-rapee} and  the preceding  proposition.
\end{proof}


In particular, and with adjoints in the Hilbert space ${\mathbf H}_{2,r}^-(\mathcal C) $
\begin{equation}
\label{solange}
T^*P=P\mathsf{M}_{b^{-1}}.
\end{equation}

We now express the operator $P$ in terms of the coefficients of the power series expansion of $\Phi$. This is not needed to prove Theorem \ref{th55}, but will be crucial in
the quaternionic setting.

\begin{proposition}
\label{illusion}
Let $\Phi(z)=\sum_{u=0}^\infty \Phi_uz^u$, with $\Phi_u\in\mathbf L(\mathcal C,\mathcal C)$. Then,
\begin{equation}
\label{sofsoi}
\begin{split}
\frac{1}{4\pi^2}\iint_{\substack{|a|=r\\|b|=r}}\left[C_\Phi(a,b)f(a),g(b)\right]_{\mathcal C}dad\overline{b}&=\sum_{v=1}^\infty\sum_{r=1}^v \langle \Phi_{v-r}f_v,g_r\rangle_{\mathcal C}+\\
&\hspace{5mm}+
\sum_{u=1}^\infty\sum_{r=1}^u \langle \Phi^*_{u-r}f_r,g_u\rangle_{\mathcal C}.
\end{split}
\end{equation}
where $f(z)=\sum_{v=1}^\infty \frac{f_v}{z^v}$ and $g(z)=\sum_{u=1}^\infty \frac{g_u}{z^u}$ belong to $\mathbf H_{2,r}^-(\mathcal C)$.
\end{proposition}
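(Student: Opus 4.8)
The plan is to compute the double contour integral by inserting the power series expansions of all the ingredients and integrating term by term, using the orthogonality relations $\frac{1}{2\pi i}\int_{|a|=r} a^{-k}\,da = \delta_{k,1}$ (equivalently, in terms of the $d\overline a$ normalization coming from the inner product \eqref{innerprod}). First I would split the kernel $C_\Phi(a,b)$ into its two natural pieces, writing $C_\Phi(a,b) = \frac{\Phi(a)}{1-a\overline b} + \frac{\Phi(b)^*}{1-a\overline b}$, and treat the two resulting integrals separately; the first will produce the sum involving $\Phi_{v-r}$ and the second the sum involving $\Phi^*_{u-r}$, matching the two terms on the right-hand side of \eqref{sofsoi}.

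\textbf{Key steps.} For the first piece, I expand $\frac{1}{1-a\overline b} = \sum_{k=0}^\infty a^k \overline b^{\,k}$ (valid since $|a|=|b|=r<1$), expand $\Phi(a) = \sum_{j\ge 0}\Phi_j a^j$, and expand $f(a) = \sum_{v\ge 1} f_v a^{-v}$ and $g(b) = \sum_{u\ge 1} g_u b^{-u}$. Collecting powers, the $a$-integral forces $j + k - v = -1$ and (using $d\overline b = r^2\,db/b^2$, as recorded just after \eqref{johanna}) the $b$-integral forces the matching index on the $g$ side. Writing $r = v - j$ for the inner summation variable, the constraints $k\ge 0$, $j\ge 0$ translate into $1\le r\le v$, and the geometric factors $r^{2u}$ cancel against the $R^{2u}$ weights exactly as in the computation of the reproducing kernel in the proof following \eqref{rkz1}; this yields $\sum_{v\ge 1}\sum_{r=1}^v \langle \Phi_{v-r} f_v, g_r\rangle_{\mathcal C}$. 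The second piece is handled symmetrically: here $\Phi(b)^*$ contributes $\sum_{j\ge 0}\Phi_j^* b^{\,j}$ on the $b$-side, and the same bookkeeping gives $\sum_{u\ge 1}\sum_{r=1}^u \langle \Phi^*_{u-r} f_r, g_u\rangle_{\mathcal C}$. Throughout I would invoke the absolute convergence guaranteed by \eqref{concorde} together with the boundedness estimate $\|C_\Phi(a,b)\|\le 2M/(1-r_0^2)$ to justify interchanging the sums with the integrals; alternatively, one may simply truncate $f$ and $g$ to finitely many terms, do the computation for polynomials in $1/z$, and pass to the limit using the continuity of $[\cdot,\cdot]_\Phi$ established in the previous proposition.

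\textbf{Main obstacle.} The calculation itself is routine once the expansions are in place; the only point requiring care is the index bookkeeping — making sure the nonnegativity constraints on the three summation indices (the exponent $k$ from the geometric series, the exponent $j$ from $\Phi$, and the principal-part index) combine to give precisely the triangular ranges $1\le r\le v$ and $1\le r\le u$, and that the cancellation of the radial weights $r^{2u}$ versus $R^{2u}=r^{-2u}$ comes out exactly (no leftover power of $r$). A secondary technical nuisance is keeping consistent track of the $2\pi i$ normalizations and the relation $d\overline b = r^2\,db/b^2$ across both integrations, which was already used in the proof of \eqref{johanna} and can be imported directly. I expect no genuine difficulty beyond this.
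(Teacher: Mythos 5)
Your proposal is correct and follows essentially the same route as the paper: expand the geometric series $1/(1-a\overline b)=\sum_{t\ge 0}a^t\overline b^{\,t}$ together with the power series of $\Phi$, $f$ and $g$, integrate term by term, and keep only the index combinations that survive the two contour integrations, which yield exactly the triangular ranges in \eqref{sofsoi}. One small slip to fix in the writeup: $\Phi(b)^*$ expands as $\sum_{j\ge 0}\Phi_j^*\,\overline b^{\,j}$, not $\sum_{j\ge 0}\Phi_j^*\,b^{\,j}$; with that correction your bookkeeping for the second piece reproduces the paper's surviving-index conditions $t+s-u=-1$ and $t-v=-1$.
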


\begin{proof}
We have
\[
C_\Phi(a,b)=\sum_{t=0}^\infty a^t\overline{b}^t(\sum_{s=0}^\infty \Phi_s a^s+\Phi^*_s\overline{b}^s)
\]
and so
\[
\begin{split}
\langle C_\Phi(a,b)f(a), g(b)\rangle_{\mathcal C}&=\sum_{u=1;v=1;t=0;s=0}^\infty \overline{b}^{-u}\overline{b}^ta^{-v}a^ta^s\langle \Phi_sf_v , g_u\rangle_{\mathcal C}+\\
&\hspace{5mm}+\sum_{u=1;v=1;t=0;s=0}^\infty \overline{b}^{-u}\overline{b}^{s}\overline{b}^ta^ta^{-v}\langle \Phi_s^*f_v, g_u\rangle_{\mathcal C}.
\end{split}
\]
When computing the integral
\[
\frac{1}{4\pi^2}\iint_{\substack{|a|=r\\|b|=r}}\left\{\cdot\right\}dad\overline{b},
\]
in the first sum, only the terms for which
\[
t-u=-1\quad{\rm and}\quad t+s-v=-1
\]
lead to a possibly nonzero contribution, while in the second sum the corresponding indices are
\[
t+s-u=-1\quad{\rm and}\quad t-v=-1.
\]
So \eqref{sofsoi} holds, that is:
\[
\begin{split}
\frac{1}{4\pi^2}\iint_{\substack{|a|=r\\|b|=r}}\left[C_\Phi(a,b)f(a),g(b)\right]_{\mathcal C}dad\overline{b}&=\sum_{v=1}^\infty\sum_{r=1}^v \langle \Phi_{v-r}f_v,g_r\rangle_{\mathcal C}+\\
&\hspace{5mm}+
\sum_{u=1}^\infty\sum_{r=1}^u \langle \Phi^*_{u-r}f_r,g_u\rangle_{\mathcal C}.
\end{split}
\]
\end{proof}

We note that \eqref{bois_de_vincennes} expresses the fact that the lower triangular block-matrix
\begin{equation}
T_\Phi=\begin{pmatrix}\Phi_0&0&0&\cdots\\
2\Phi_1^*&\Phi_0&0&\cdots\\
2\Phi_2^*&2\Phi_1^*&\Phi_0&0&\cdots\\
\vdots&      & \ddots     &\ddots&0&\cdots
\end{pmatrix}
\label{ttt}
\end{equation}
defines a bounded (block-Toeplitz) operator from $\ell_{2,r}(\mathbb N,\mathcal C)$ (see Definition \ref{shulamit} for the latter) into itself.
Then the right side of \eqref{sofsoi} can be rewritten as
\begin{equation}
\langle ({\rm Re}\,T_\Phi)\,{\mathbf f},{\mathbf g}\rangle_{\ell_{2,r}(\mathbb N,\mathcal C)},
\end{equation}
with
\[
{\mathbf f}=\begin{pmatrix}f_1\\ f_2\\ \vdots\end{pmatrix}\quad{\rm and}\quad {\mathbf g}=\begin{pmatrix}g_1\\ g_2\\ \vdots\end{pmatrix}.
\]
By uniqueness of the operator $P$ defined in \eqref{quai-de-la-rapee}, we see that ${\rm Re}\,T_\Phi$ is the matrix representation of $P$ in the standard orthonormal basis of
$\mathbf H_{2,r}^-(\mathcal C)$.
\section{The complex variable setting: The realization theorem}
\setcounter{equation}{0}
\label{sec4}
We now apply the results of Section \ref{lydia} to
$\mathcal H=\mathbf H_{2,r}^-(\mathcal C)$ and to the operator $P$ defined by \eqref{richelieu_drouot}. According to Proposition \ref{milano1234}, the space ${\rm ran}\, |P|^{1/2}$ endowed with the
inner product \eqref{eglantine} is a Krein space of functions analytic in $|z|>r$. To get a Krein space of functions analytic in $\mathbb D_r$ with the required reproducing kernel we
define an associated transform \eqref{widehat} and take $\Omega=\mathbb D_r$. We choose
the family $(g_z)_{z\in \mathbb D_r}$ to be
\[
g_z\,\,:\,\,b\mapsto \frac{I_{\mathcal C}}{b-\overline{z}},\quad z\in\mathbb D_r.
\]

Formula \eqref{esther} gives (see also \cite[p. 133]{MR903068})
\begin{equation}
\label{euxodice}
\begin{split}
[\frac{\xi}{a-\overline{w}},\frac{\eta}{b-\overline{z}}]_{\Phi}&=\langle \left(P\frac{\xi}{\cdot-\overline{w}}\right)(b),\frac{\eta}{b-\overline{z}}
\rangle_{\mathbf H_{2,r}^-(\mathcal C)}\\
&=
[\frac{\Phi(\overline{z})^*+\Phi(\overline{w})}{1-z\overline{w}}\xi,\eta]_{\mathcal C},\quad |z|<r,\,\,\, |w|<r,
\end{split}
\end{equation}

\begin{definition}
The space consisting of the functions $F$ such that
\begin{equation}
[F(z),\eta]_{\mathcal C}=\langle(|P|^{1/2}f)(b),\frac{\eta}{b-\overline{z}}\rangle_{\mathbf H_{2,r}^-(\mathcal C)},\quad f\in \mathbf H_{2,r}^-(\mathcal C),
\end{equation}
built from Proposition \ref{sar123} is {\sl a} reproducing kernel Krein space with reproducing kernel defined by \eqref{euxodice}, which
we will denote by $\mathcal L(\Phi^\sharp)$ with $\Phi^\sharp(z)=(\Phi(\overline{z}))^*$.
\end{definition}
Note that
\begin{equation}
\label{shoterazoulai}
[F(z),\eta]_{\mathcal C}=[f,\frac{\eta}{b-\overline{z}}]_{\Phi},\quad f\in \mathbf H_{2,r}^-(\mathcal C),
\end{equation}
and that \eqref{shoterazoulai} can also be written as
\begin{equation}
\label{shoterazoulai1}
[F(z),\eta]_{\mathcal C}=\langle(Pf)(b),\frac{\eta}{b-\overline{z}}\rangle_{\mathbf H_{2,r}^-(\mathcal C)}=[Pf, P(g_z\eta)]_P,
\end{equation}
see \eqref{esther1}. In particular we can write:
\[
[C_{\Phi^\sharp}(\cdot,w)\xi,C_{\Phi^\sharp}(\cdot,z)\eta]_{\mathcal L(\Phi^\sharp)}=[P(g_w\xi),P(g_z\eta)]_P.
\]

To obtain the required realization we first prove that the space ${\mathcal L(\Phi^\sharp)}$ is invariant under the backward shift operator defined by
\[
(R_0f)(z)=\begin{cases}\,\,\dfrac{f(z)-f(0)}{z}, z\not=0\\
\,\, f^\prime(0),\,\,\,\,\hspace{9.8mm} z=0,\end{cases}
\]
for vector-valued functions analytic in a neighborhood of the origin.

\begin{proposition}
Let $f\in {\mathbf H}_{2,r}^-(\mathcal C)$. We have
\begin{equation}
\label{manon}
[(R_0\widehat{Pf})(z),\eta]_{\mathcal C}=[Tf,\dfrac{\eta}{b-\overline{z}}]_{\Phi}=\langle (PTf)(b),\frac{\eta}{b-\overline{z}}\rangle_{\mathbf H_{2,r}^-(\mathcal C)},
\end{equation}
where $T$ is defined by \eqref{TTT}.
\label{prop42}
\end{proposition}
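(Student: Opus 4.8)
The plan is to compute $[(R_0\widehat{Pf})(z),\eta]_{\mathcal C}$ directly from the definitions and identify it with the claimed expressions. Recall from \eqref{shoterazoulai1} that $[\widehat{Pf}(z),\eta]_{\mathcal C}=[Pf,P(g_z\eta)]_P=\langle (Pf)(b),\tfrac{\eta}{b-\overline{z}}\rangle_{\mathbf H_{2,r}^-(\mathcal C)}$, so $\widehat{Pf}$ is a $\mathcal C$-valued function whose values are governed by the reproducing-kernel pairing against the family $g_z\eta=\tfrac{\eta}{b-\overline{z}}$. The first step is to write out $(R_0\widehat{Pf})(z)$ using the definition of $R_0$, namely as $\tfrac{1}{z}\big(\widehat{Pf}(z)-\widehat{Pf}(0)\big)$ for $z\neq 0$ (and the derivative at $0$), and then to pair this with $\eta\in\mathcal C$.

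The key identity to establish is the resolvent-type relation
\[
\frac{1}{z}\left(\frac{1}{b-\overline{z}}-\frac{1}{b}\right)=\frac{1}{b}\cdot\frac{1}{b-\overline{z}},
\]
valid since $\overline{z}=\bar z$ is just the scalar conjugate here and the algebra is purely scalar; this rewrites $\tfrac1z\big(g_z - g_0\big)$ as $\mathsf{M}_{b^{-1}}g_z$ acting on the constant $\eta$. Hence
\[
[(R_0\widehat{Pf})(z),\eta]_{\mathcal C}=\frac{1}{z}\Big(\langle (Pf)(b),\tfrac{\eta}{b-\overline{z}}\rangle-\langle (Pf)(b),\tfrac{\eta}{b}\rangle\Big)=\langle (Pf)(b),\mathsf{M}_{b^{-1}}\tfrac{\eta}{b-\overline{z}}\rangle_{\mathbf H_{2,r}^-(\mathcal C)}.
\]
Next I would invoke the adjoint relation \eqref{solange}, $T^*P=P\mathsf{M}_{b^{-1}}$ (equivalently \eqref{muriel}), to move $\mathsf{M}_{b^{-1}}$ off the second argument: $\langle (Pf)(b),\mathsf{M}_{b^{-1}}\tfrac{\eta}{b-\overline{z}}\rangle=\langle \mathsf{M}_{b^{-1}}^*Pf,\tfrac{\eta}{b-\overline{z}}\rangle$, but more directly use \eqref{muriel} in the form $\langle Pf,\mathsf{M}_{b^{-1}}g\rangle=\langle PTf,g\rangle$ with $g=g_z\eta$, giving $\langle (PTf)(b),\tfrac{\eta}{b-\overline{z}}\rangle_{\mathbf H_{2,r}^-(\mathcal C)}$. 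That is the last of the three claimed quantities. Finally, identifying this with the middle term $[Tf,\tfrac{\eta}{b-\overline{z}}]_{\Phi}$ is immediate from \eqref{quai-de-la-rapee} (which reads $[h,g]_\Phi=\langle Ph,g\rangle_{\mathbf H_{2,r}^-(\mathcal C)}$) applied with $h=Tf$.

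The main subtlety — not a deep obstacle but the point requiring care — is the boundary case $z=0$, where $R_0$ is defined via the derivative $f'(0)$; one should check that the resolvent identity above has the correct limiting form as $z\to 0$ (it does, yielding $\tfrac{1}{b^2}$), and that the pairing depends analytically on $z$ so the two formulas glue. The other point worth a line is ensuring $Tf\in\mathbf H_{2,r}^-(\mathcal C)$ so that $PTf$ and the pairing make sense, which is exactly the boundedness of $T$ recorded in the Corollary following \eqref{muriel}. Everything else is bookkeeping with the already-established formulas \eqref{shoterazoulai1}, \eqref{muriel}, \eqref{solange}, and \eqref{quai-de-la-rapee}.
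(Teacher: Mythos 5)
Your argument is correct and is essentially the paper's own proof: both rest on the resolvent identity turning $R_0$ into multiplication by $b^{-1}$ in the second slot of the pairing, followed by \eqref{elizabeth}/\eqref{muriel} to transfer $\mathsf{M}_{b^{-1}}$ into $T$ acting on $f$, and the definition \eqref{quai-de-la-rapee} of $P$ to identify the middle term. The only caveat is that your displayed ``key identity'' $\tfrac{1}{z}\bigl(\tfrac{1}{b-\overline{z}}-\tfrac{1}{b}\bigr)=\tfrac{1}{b(b-\overline{z})}$ is literally false as a scalar identity (the left side equals $\tfrac{\overline{z}}{z}\cdot\tfrac{1}{b(b-\overline{z})}$); it is rescued only because the scalar $\overline{z}$ sits in the conjugate-linear slot of the inner product and emerges as $z$, cancelling the $1/z$ --- which is exactly what your subsequent display implicitly does, so the computation stands.
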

\begin{proof}
From \eqref{elizabeth} we have
\[
[(R_0\widehat{Pf})(z),\eta]_{\mathcal C}=[f,\frac{\eta}{b(b-\overline{z})}]_{\Phi}
=[(Tf)(a),\dfrac{\eta}{b-\overline{z}}]_{\Phi},
\]
and by definition of $P$, we have:
\[
[Tf,\dfrac{\eta}{b-\overline{z}}]_{\Phi}=\langle (PTf)(b),\frac{\eta}{b-\overline{z}}\rangle_{\mathbf H_{2,r}^-(\mathcal C)}.
\]
\end{proof}

The linear space ${\rm ran}\,P$ is dense in ${\rm ran}\,|P|^{1/2}$ in the $\|\cdot\|_P$ norm, but
\eqref{manon} does not show that $R_0$ extends to a bounded operator. This is proved now.

\begin{proposition}
\label{closed}
$R_0$ has a continuous extension to $\mathcal L(\Phi^\sharp)$.
\end{proposition}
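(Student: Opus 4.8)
The plan is to show that the backward shift $R_0$ is a \emph{closed} everywhere-defined operator on the Krein space $\mathcal{L}(\Phi^\sharp)$, and then to invoke the closed graph theorem for Krein spaces (equivalently, decompose $\mathcal{L}(\Phi^\sharp)$ as a Hilbert space direct sum along a fundamental decomposition and apply the classical closed graph theorem). By Proposition~\ref{sar123} the elements of $\mathcal{L}(\Phi^\sharp)$ are precisely the transforms $\widehat{|P|^{1/2}f}$ with $f\in\mathbf H_{2,r}^-(\mathcal C)$, and since $\mathrm{ran}\,P$ is dense in $\mathrm{ran}\,|P|^{1/2}$ in the $\|\cdot\|_P$-norm, it suffices to understand the action of $R_0$ on the dense set of functions $\widehat{Pf}$. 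Formula \eqref{manon} of Proposition~\ref{prop42} identifies $R_0\widehat{Pf}=\widehat{PTf}$, where $T$ is the bounded operator of \eqref{TTT}; thus $R_0$ is at least densely defined with a natural candidate for its closure.

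First I would make precise the domain and the meaning of ``continuous extension'': since $R_0$ is a priori defined on all functions of $\mathcal{L}(\Phi^\sharp)$ analytic near the origin (and every element of $\mathcal{L}(\Phi^\sharp)$ is such, being a function analytic in $\mathbb D_r$), the claim is that $R_0:\mathcal{L}(\Phi^\sharp)\to\mathcal{L}(\Phi^\sharp)$ is everywhere defined and bounded. Next I would verify that $R_0$ is closed: suppose $\widehat{|P|^{1/2}f_n}\to F$ and $R_0\widehat{|P|^{1/2}f_n}\to G$ in the Krein-space topology (i.e.\ in the Hilbert norm of a fundamental decomposition, equivalently with respect to $\|\cdot\|_P$ on the $f$-side). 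Using the reproducing kernel property---evaluation at each $z\in\mathbb D_r$ is continuous on $\mathcal{L}(\Phi^\sharp)$ because $K(\cdot,z)\xi\in\mathcal{L}(\Phi^\sharp)$---convergence in $\mathcal{L}(\Phi^\sharp)$ forces pointwise convergence: $F_n(z)\to F(z)$ and $(R_0F_n)(z)\to G(z)$ for every $z$. Since $(R_0F_n)(z)=\frac{F_n(z)-F_n(0)}{z}$ for $z\neq 0$, passing to the limit gives $G(z)=\frac{F(z)-F(0)}{z}$, i.e.\ $G=R_0F$. Hence the graph of $R_0$ is closed.

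With closedness in hand, the result follows from the Krein-space closed graph theorem: fix a fundamental decomposition $\mathcal{L}(\Phi^\sharp)=\mathcal{L}_+\oplus\mathcal{L}_-$ turning it into a Hilbert space, observe $R_0$ is an everywhere-defined closed operator on this Hilbert space, and conclude boundedness by the classical closed graph theorem. I would also record the cleaner route already signposted in the excerpt: combine \eqref{manon} with \eqref{quai-de-la-rapee}, \eqref{rtyuiop123} and \eqref{michelle} to get, for $f\in\mathbf H_{2,r}^-(\mathcal C)$,
\[
\|R_0\widehat{Pf}\|^2_{\mathcal{L}(\Phi^\sharp)}=[PTf,PTf]_P=[Tf,Tf]_\Phi=\langle PTf,Tf\rangle_{\mathbf H_{2,r}^-(\mathcal C)},
\]
then use \eqref{muriel}/\eqref{solange} ($T^*P=P\mathsf M_{b^{-1}}$) to transfer $T$ off the right-hand slot; here Loewner's theorem enters, controlling $|P|$ and $|PTf|$ in terms of the self-adjoint bounded operator $P$ and the bounded operators $T,\mathsf M_{b^{-1}}$, yielding an estimate $\|R_0\widehat{Pf}\|_{\mathcal L(\Phi^\sharp)}\le c\,\|\widehat{Pf}\|_{\mathcal L(\Phi^\sharp)}$ with $c$ independent of $f$. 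Density of $\{\widehat{Pf}\}$ then extends $R_0$ to a bounded operator on all of $\mathcal{L}(\Phi^\sharp)$.

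The main obstacle is the step that genuinely uses the Krein (as opposed to Hilbert) structure: one cannot simply bound $\|R_0\widehat{Pf}\|_{\mathcal L(\Phi^\sharp)}$ by $\|\widehat{Pf}\|_{\mathcal L(\Phi^\sharp)}$ by a one-line norm inequality, because the $\mathcal L(\Phi^\sharp)$-``norm'' is indefinite and the naive estimate would involve the unbounded object $|P|^{-1}$ on ranges. The careful argument is to pass to the associated \emph{positive} norm $\|\cdot\|_P$, express everything through the bounded self-adjoint $P$ and the bounded $T$, $\mathsf M_{b^{-1}}$ related by \eqref{solange}, and then apply Loewner's theorem (operator monotonicity of the square root) to compare $|PTf|$ with $|P|$ uniformly; this is exactly the substitute, announced in the introduction, for the Krein-type boundedness result used in \cite{MR903068}. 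The closed-graph argument above is the conceptually safe fallback if one prefers to avoid the explicit Loewner estimate.
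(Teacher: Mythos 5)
Your first (closed-graph) route has a genuine gap: the closed graph theorem requires an \emph{everywhere defined} operator between the two spaces, and you assume without justification that $R_0F\in\mathcal L(\Phi^\sharp)$ for every $F\in\mathcal L(\Phi^\sharp)$. What is known a priori is only that $R_0F$ is an analytic $\mathcal C$-valued function on $\mathbb D_r$; its membership in the reproducing kernel Krein space is precisely the invariance statement at issue, and Proposition \ref{prop42} establishes it only on the dense subset $\{\widehat{Pf}\}$. On its natural domain $\{F:\ R_0F\in\mathcal L(\Phi^\sharp)\}$ the operator is indeed closed by your pointwise-convergence argument, but a closed \emph{densely} defined operator need not be bounded, and proving that the domain is all of $\mathcal L(\Phi^\sharp)$ is essentially equivalent to the boundedness you are after. (The paper uses the pointwise-convergence observation in the opposite, legitimate, direction: \emph{after} the uniform bound on the dense set is established, the continuous extension $X$ is identified with $R_0$ because norm convergence in a reproducing kernel space implies pointwise convergence.)

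Your second route is the one the paper actually takes, but as written it is only a gesture, and two points need fixing. First, the quantity to bound is the \emph{positive} norm of the associated Hilbert space: by \eqref{manon} and \eqref{rtyui} one has $\|R_0\widehat{Pf}\|^2=\langle |P|Tf,Tf\rangle_{\mathbf H_{2,r}^-(\mathcal C)}$ and $\|\widehat{Pf}\|^2=\langle |P|f,f\rangle_{\mathbf H_{2,r}^-(\mathcal C)}$; your chain $[PTf,PTf]_P=\langle PTf,Tf\rangle$ computes the indefinite form, not the norm. Second, the actual content of the Loewner step is missing: one must show $T^*|P|T\le k\,|P|$, and since \eqref{solange} intertwines $T^*$ with $P$ rather than with $|P|$, there is no direct comparison. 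The paper's trick is to square first,
\[
(T^*|P|T)^2=T^*|P|\,(TT^*)\,|P|T\le \|T\|^2\,T^*|P|^2T=\|T\|^2\,T^*P^2T
=\|T\|^2\,P\mathsf{M}_{b^{-1}}(\mathsf{M}_{b^{-1}})^*P\le \|T\|^2\|\mathsf{M}_{b^{-1}}\|^2\,P^2,
\]
using $|P|^2=P^2$ and $T^*P=P\mathsf{M}_{b^{-1}}$, and only then to invoke operator monotonicity of the square root to conclude $T^*|P|T\le\sqrt{m}\,|P|$. Supplying this computation, and then running your density-plus-pointwise-convergence argument to identify the extension with $R_0$, would complete the proof along the paper's lines.
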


\begin{proof}
The proof uses Loewner's theorem.
We have from \eqref{manon} and \eqref{rtyui} that
\[
\|R_0\widehat{Pf}\|^2=\langle |P|Tf,Tf\rangle_{\mathbf H_{2,r}^-(\mathcal C)}
\]
and
\[
\|\widehat{Pf}\|^2=\langle |P|f,f\rangle_{\mathbf H_{2,r}^-(\mathcal C)}
\]
To prove continuity of $R_0$ it is enough to show that there exists $k>0$ such that
\[
T^*|P|T\le k |P|.
\]
We  first show that there exists a constant $m>0$ such that
\begin{equation}
\label{123123123!!!}
(T^*|P|T)^2\le m P^2,
\end{equation}
and then use Loewner's theorem (see \cite{donoghue}) to get $(T^*|P|T)\le \sqrt{m} |P|$.
We have (using $PT=\mathsf{M}_{b^{-1}}P$; see \eqref{solange})
\[
\begin{split}
(T^*|P|T)^2&=T^*|P|\cdot(TT^*)\cdot|P|T\\
&\le T^*|P|(\|T\|^2I)|P|T\\
&= \|T\|^2 T^*|P|^2T\\
&= \|T\|^2 T^*P^2T\\
&=\|T\|^2 P\mathsf{M}_{b^{-1}}(\mathsf{M}_{b^{-1}})^*P\\
&\le \|\mathsf{M}_{b^{-1}}\|^2\|T\|^2 P^2,
\end{split}
\]
since the operator $\mathsf{M}_{b^{-1}}$ is bounded from $\mathbf H_{2,r}^-(\mathcal C)$
into itself. Thus $R_0$ has a continuous extension, say $X$. In fact $X$ is still defined as $R_0$. Indeed, if $(F_n)$ is a Cauchy sequence in $\mathcal L(\Phi^\sharp)$
converging to $F$ in an associated Hilbert space norm (they are all equivalent; see \cite{bognar}),
then $(R_0F_n)$ tends to $XF$ in the same norm. But in a reproducing kernel space, convergence in norm implies pointwise convergence, and so $XF(w)=R_0F(w).$

\end{proof}

\begin{proposition}
The adjoint of the operator $R_0$ is given by
\begin{equation}
R_0^{[*]}\widehat{(|P|^{1/2}f)}=\widehat{|P|^{1/2}T^*f},\quad f\in{\rm ran}\,(I-\pi),
\end{equation}
where $T$ is defined by \eqref{TTT} and $f\in {\mathbf H}_{2,r}^-(\mathcal C)$.
\end{proposition}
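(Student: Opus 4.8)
The plan is to compute the adjoint directly from the defining relation \eqref{rtyuiop123} of the inner product on $\mathcal L(\Phi^\sharp)$, using the identification of $R_0$ on $\widehat{Pf}$-type elements provided by Proposition \ref{prop42} and the form \eqref{solange} of the ``shift intertwining'' relation $T^*P=P\mathsf{M}_{b^{-1}}$. First I would note that, since $\mathcal L(\Phi^\sharp)$ is the image of $\mathbf H_{2,r}^-(\mathcal C)$ under the associated transform, a dense set of elements is of the form $\widehat{|P|^{1/2}f}$ with $f\in{\rm ran}(I-\pi)$, so it suffices to verify the identity
\[
[R_0\widehat{|P|^{1/2}f},\widehat{|P|^{1/2}g}]_K=[\widehat{|P|^{1/2}f},\widehat{|P|^{1/2}T^*g}]_K
\]
for $f,g\in{\rm ran}(I-\pi)$; equivalently, by \eqref{rtyuiop123}, that $\langle |P|Tf',g\rangle=\langle |P|f',T^*g\rangle$ after reducing to the case $f=|P|^{1/2}\sigma f'$ (i.e. replacing $f$ by $|P|^{1/2}\sigma f'$ so that $|P|^{1/2}f$ becomes $Pf'$), so the point is really the operator identity $T^*|P|=|P|\,(\text{something})$ matching the right-hand side.

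The key steps, in order, are: (1) take $F=\widehat{Pf}$ and $G=\widehat{Pg}$ for $f,g\in\mathbf H_{2,r}^-(\mathcal C)$ and write $[R_0F,G]_K$ using Proposition \ref{prop42}, getting $[R_0\widehat{Pf},\widehat{Pg}]_K=\langle PTf,g\rangle_{\mathbf H_{2,r}^-(\mathcal C)}$ via \eqref{manon} and \eqref{shoterazoulai1}; (2) use the definition \eqref{rtyuiop123} together with \eqref{rtyui} to re-express this as $[\widehat{PTf},\widehat{Pg}]_K$ and then recognize, via \eqref{quai-de-la-rapee} and \eqref{solange}, that $PTf=T^*Pf$, hence $[R_0\widehat{Pf},\widehat{Pg}]_K=[\widehat{T^*Pf},\widehat{Pg}]_K$; (3) rewrite the left factor: since $T^*Pf=T^*|P|^{1/2}(|P|^{1/2}\sigma f)$ and we want it in the form $\widehat{|P|^{1/2}(\cdot)}$, observe that on ${\rm ran}\,|P|^{1/2}$ we have $\widehat{Pf}=\widehat{|P|^{1/2}(|P|^{1/2}\sigma f)}$, so with $h=|P|^{1/2}\sigma f\in{\rm ran}(I-\pi)$ the identity becomes $[R_0\widehat{|P|^{1/2}h},\widehat{|P|^{1/2}g}]_K=[\widehat{|P|^{1/2}T^*h},\widehat{|P|^{1/2}g}]_K$ — which is exactly the claimed formula tested against the dense family $\widehat{|P|^{1/2}g}$; (4) invoke Proposition \ref{closed} to know $R_0$ is bounded, so its adjoint $R_0^{[*]}$ is everywhere defined on $\mathcal L(\Phi^\sharp)$ and the identity on a dense set determines it.

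I expect the main obstacle to be the bookkeeping around domains and square roots: one must be careful that $\widehat{|P|^{1/2}f}$ depends on $f$ only through $(I-\pi)f$ (so restricting to $f\in{\rm ran}(I-\pi)$ is legitimate and harmless), that $|P|^{1/2}T^*f$ indeed lies in ${\rm ran}\,|P|^{1/2}$ so that $\widehat{|P|^{1/2}T^*f}$ makes sense, and that the passage $PTf=T^*Pf$ is exactly \eqref{solange} read on $\mathbf H_{2,r}^-(\mathcal C)$ — not on the quotient/completion. A secondary subtlety is checking that $T^*$ here is the Hilbert-space adjoint of $T$ on $\mathbf H_{2,r}^-(\mathcal C)$ (the same $T^*$ appearing in \eqref{solange}), and that it maps ${\rm ran}(I-\pi)$ appropriately; once these identifications are pinned down, the computation is the short chain of equalities above and the conclusion follows by density and the continuity established in Proposition \ref{closed}.
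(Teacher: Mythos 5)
Your overall strategy --- test the adjoint identity against a dense family of the form $\widehat{Pg}$ and push the computation through the intertwining relations between $T$, $\mathsf{M}_{b^{-1}}$ and $P$ --- is the same as the paper's (the paper writes $R_0^{[*]}\widehat{|P|^{1/2}f}=\widehat{|P|^{1/2}Xf}$ for an unknown $X$, pairs with $\widehat{|P|^{1/2}g}$ in two ways, and solves for $X$). But your central step (2) is wrong as stated. You invoke \eqref{solange} to assert $PTf=T^*Pf$; what \eqref{solange} says is $T^*P=P\mathsf{M}_{b^{-1}}$, and $T\neq\mathsf{M}_{b^{-1}}$: on coefficients $T$ is the backward shift $f_u\mapsto f_{u+1}$ while $\mathsf{M}_{b^{-1}}$ is the forward shift, so $\mathsf{M}_{b^{-1}}$ is (up to a positive weight) the adjoint $T^*$, not $T$. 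The identity one actually extracts from \eqref{muriel} and \eqref{solange} is $\langle PTf,g\rangle=\langle Pf,\mathsf{M}_{b^{-1}}g\rangle=[Pf,P\mathsf{M}_{b^{-1}}g]_P=[Pf,T^*Pg]_P$, i.e.\ the operator lands in the \emph{second} slot as $g\mapsto T^*Pg$. This matters for a structural reason too: your chain in steps (2)--(3) keeps the operator in the first argument throughout and ends with $[R_0\widehat{|P|^{1/2}h},G]_K=[\widehat{|P|^{1/2}T^*h},G]_K$ for all $G$ in a dense set. Even if that were true it would identify $R_0$ itself, not $R_0^{[*]}$; to compute the adjoint you must arrive at $[\widehat{|P|^{1/2}f},R_0\widehat{|P|^{1/2}g}]_K=[\widehat{|P|^{1/2}T^*f},\widehat{|P|^{1/2}g}]_K$, and it is precisely the correctly oriented relation $T^*P=P\mathsf{M}_{b^{-1}}$ that makes the operator appear on the other side.

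The second gap is the one you label as ``bookkeeping'' but do not resolve: your own reduction in step (1) (``$\langle |P|Tf',g\rangle=\langle |P|f',T^*g\rangle$'') and your step (3) rewriting of $\widehat{T^*Pf}$ as $\widehat{|P|^{1/2}T^*h}$ both silently commute $T^*$ past $|P|^{1/2}$ (equivalently past $\sigma$), and nothing in the construction gives $[T^*,|P|^{1/2}]=0$. The paper is careful on exactly this point: after pairing against $\widehat{|P|^{1/2}g}$ it only deduces the operator identity $\sigma(I-\pi)X=T^*\sigma(I-\pi)$, keeping the factors $\sigma(I-\pi)$ explicit instead of cancelling them. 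So the repair is twofold: use \eqref{solange} in the correct direction so that the operator moves to the second argument of the form, and either carry the $\sigma(I-\pi)$ factors as the paper does or supply a justification for the commutation you are implicitly using.
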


\begin{proof}
Since $R_0$ is bounded, so is its adjoint $R_0^{[*]}$. In particular the range of $R_0^{[*]}$ is inside $\mathcal L(\Phi^\sharp)$ and
 we can define $X$ via $R_0^{[*]}\widehat{|P|^{1/2}f}=\widehat{|P|^{1/2}Xf}$. We have:
\[
\begin{split}
[R_0^{[*]}\widehat{|P|^{1/2}f},\widehat{|P|^{1/2}g}]_{\mathcal L(\Phi^\sharp)}&=[R_0^{[*]}\widehat{|P|^{1/2}Xf},\widehat{|P|^{1/2}g}]_{\mathcal L(\Phi^\sharp)}\\
&=\langle Xf,\sigma(I-\pi)g\rangle_{\mathbf H_{2,r}^-(\mathcal C)}\\
&=\widehat{|P|^{1/2}Xf},R_0\widehat{|P|^{1/2}g}]_{\mathcal L(\Phi^\sharp)}\\
&=\langle f,\sigma(I-\pi)Tg\rangle_{\mathbf H_{2,r}^-(\mathcal C)}
\end{split}
\]
and so $\sigma(I-\pi)X=T^*\sigma(I-\pi)$.
\end{proof}

\begin{theorem}
\label{th55}
Let $\mathcal C$ be a Hilbert space, and let $\Phi$ be a $\mathbf L(\mathcal C,\mathcal C)$-valued function analytic in
$\mathbb D_{r_0}=$ with $r_0<1$) and continuous in $|z|\le r_0$ in the operator topology.
The space $\mathcal L(\Phi^\sharp)$ is $R_0$ invariant and $R_0$ is coisometric in this space.
Furthermore
Let $C$ denote the point evaluation at the origin. It is a bounded linear operator from the Krein space $\mathcal L(\Phi^\sharp)$ into the Hilbert space $\mathcal C$, and we denote
its adjoint by $C^{[*]}$. We have
\begin{equation}
\label{rehovyavne}
\Phi(z)=i{\rm Im}\,\Phi(0)+\frac{1}{2}C(I-zR_0^{[*]})(I+zR_0^{[*]})^{-1}C^{[*]}
\end{equation}
Finally any such realization
\[
\Phi(z)=i{\rm Im}\,\Phi(0)+\frac{1}{2}C(I-zV^{[*]})(I+zV^{[*]})^{-1}D^{[*]}
\]
 with a coisometric operator $V$ acting in a Krein space $\mathcal K$, and closely outer connected, meaning that the span of the operators
$(I-zV)^{-[*]}D^{[*]}$ is dense in the space,
is unique up to a weak isomorphism.
\end{theorem}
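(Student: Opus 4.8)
The plan is to establish, in order, that $R_0$ is coisometric on $\mathcal L(\Phi^\sharp)$, the realization formula \eqref{rehovyavne}, and finally the uniqueness statement, where the real work lies. For the coisometry, I would note that $R_0$, hence $R_0^{[*]}$, is bounded by Proposition \ref{closed}, so it suffices to check that $R_0^{[*]}$ is isometric on the dense span of the kernel functions $C_{\Phi^\sharp}(\cdot,w)\eta$, $w\in\mathbb D_r\setminus\{0\}$, $\eta\in\mathcal C$. From the definition of $R_0$ and the reproducing property one has $R_0^{[*]}C_{\Phi^\sharp}(\cdot,w)\eta=\frac1{\overline w}\bigl(C_{\Phi^\sharp}(\cdot,w)-C_{\Phi^\sharp}(\cdot,0)\bigr)\eta$; pairing two such expressions and using the reproducing property once more reduces the isometry to the kernel identity
\[
C_{\Phi^\sharp}(z,w)-C_{\Phi^\sharp}(0,w)-C_{\Phi^\sharp}(z,0)+C_{\Phi^\sharp}(0,0)=z\overline w\,C_{\Phi^\sharp}(z,w),
\]
which is immediate from the Carathéodory shape of the kernel; boundedness and density then give $R_0R_0^{[*]}=I$.

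For \eqref{rehovyavne}, the reproducing property shows that $C$ is bounded and $C^{[*]}\eta=C_{\Phi^\sharp}(\cdot,0)\eta$, so $CC^{[*]}=C_{\Phi^\sharp}(0,0)=2\,{\rm Re}\,\Phi(0)$. Using the elementary operator identity $\tfrac12(I-u)(I+u)^{-1}=(I+u)^{-1}-\tfrac12 I$, the right-hand side of \eqref{rehovyavne} reduces to $i\,{\rm Im}\,\Phi(0)-\tfrac12 CC^{[*]}+C(I+zR_0^{[*]})^{-1}C^{[*]}$; the first two terms combine to $-\Phi(0)^{[*]}$, and, expanding the remaining resolvent near the origin and using $C(R_0^{[*]})^nC^{[*]}=(CR_0^nC^{[*]})^{[*]}$ with $CR_0^nC^{[*]}$ the $n$-th Taylor coefficient at $0$ of $C_{\Phi^\sharp}(\cdot,0)=\Phi^\sharp(\cdot)+\Phi(0)$, one recognizes the resulting power series as that of $\Phi$. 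These computations also exhibit the canonical data $(\mathcal L(\Phi^\sharp),R_0,C,C)$ as a coisometric, closely outer connected realization of $\Phi$, the latter because $C_{\Phi^\sharp}(\cdot,w)\eta=(I-wR_0)^{-[*]}C^{[*]}\eta$.

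For uniqueness, I would show that every closely outer connected coisometric realization $(\mathcal K,V,C_{\mathcal K},D_{\mathcal K})$ of $\Phi$ (where $C_{\mathcal K},D_{\mathcal K}$ are the operators called $C,D$ in the statement) is weakly isomorphic to the canonical one, and then compose. On the dense subspace ${\rm span}\{(I-wV)^{-[*]}D_{\mathcal K}^{[*]}\eta\}$ of $\mathcal K$ define $\Lambda\bigl((I-wV)^{-[*]}D_{\mathcal K}^{[*]}\eta\bigr)=C_{\Phi^\sharp}(\cdot,w)\eta$ and extend linearly. The crux is that $\Lambda$ is well defined and Krein-isometric, i.e.\ that $[(I-wV)^{-[*]}D_{\mathcal K}^{[*]}\eta,(I-zV)^{-[*]}D_{\mathcal K}^{[*]}\xi]_{\mathcal K}=[C_{\Phi^\sharp}(z,w)\eta,\xi]_{\mathcal C}$. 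Writing the left side as $[D_{\mathcal K}(I-zV)^{-1}(I-\overline w V^{[*]})^{-1}D_{\mathcal K}^{[*]}\eta,\xi]_{\mathcal C}$ and invoking the resolvent identity
\[
(1-z\overline w)(I-zV)^{-1}(I-\overline w V^{[*]})^{-1}=(I-zV)^{-1}+(I-\overline w V^{[*]})^{-1}-I,
\]
which is exactly the Stein identity equivalent to $VV^{[*]}=I$ (proved by clearing the two denominators), together with the realization formula to re-express $D_{\mathcal K}(I-zV)^{-1}D_{\mathcal K}^{[*]}$, its adjoint and $D_{\mathcal K}D_{\mathcal K}^{[*]}$ through $\Phi^\sharp$ at $z$, $w$, $0$, the right side collapses to $\frac{\Phi^\sharp(z)+\Phi^\sharp(w)^{[*]}}{1-z\overline w}=C_{\Phi^\sharp}(z,w)$. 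Closely outer connectedness of both realizations then makes $\Lambda$ densely defined with dense range; it is isometric and intertwines $V$ with $R_0$ and $C_{\mathcal K},D_{\mathcal K}$ with $C$ on its domain, hence is a weak isomorphism onto $\mathcal L(\Phi^\sharp)$, and composing the weak isomorphisms attached to any two realizations gives the assertion.

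The main obstacle I anticipate is the last step rather than the Gram-matrix computation: the latter, though it uses the coisometry in an essential way to collapse a double power series into a single one, is ultimately bookkeeping, whereas upgrading the densely defined Krein-isometric $\Lambda$ with dense range to an honest weak isomorphism is genuinely delicate in the Krein setting. Such a map need not extend to a bounded everywhere-defined operator — precisely the non-continuous everywhere-defined unitary phenomenon recalled in the Introduction — so one has to argue with the exact definition of weak isomorphism, checking the closability of $\Lambda$ and of its inverse together with the propagation of the intertwining relations, rather than invoke a closed-graph argument as one could over a Hilbert space.
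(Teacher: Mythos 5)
Your overall architecture matches the paper's (coisometry first, then the resolvent formula via the Taylor coefficients $CR_0^n$, then uniqueness via a densely defined isometric relation that need not extend to a continuous unitary in a Krein space), but your proof of the coisometry is genuinely different. The paper never touches the kernel functions at that stage: it works upstairs in $\mathbf H_{2,r}^-(\mathcal C)$, combining the intertwining \eqref{muriel}, the identity $T^*P=P\mathsf{M}_{b^{-1}}$ and the fact that $TT^*=I$ on ${\rm ran}\,(I-\pi)$ to get $TT^{[*]}P=I$, and then transports this through the hat transform to $R_0R_0^{[*]}=I$ on $\mathcal L(\Phi^\sharp)$. You instead compute $R_0^{[*]}$ on the kernel functions and reduce everything to the functional identity $C_{\Phi^\sharp}(z,w)-C_{\Phi^\sharp}(z,0)-C_{\Phi^\sharp}(0,w)+C_{\Phi^\sharp}(0,0)=z\overline{w}\,C_{\Phi^\sharp}(z,w)$, which is correct. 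Your route is more self-contained and classical, using only the reproducing property plus the boundedness of $R_0$ from Proposition \ref{closed}; the paper's route is the one that transfers verbatim to the quaternionic setting, where pointwise kernel manipulations are more delicate. Your uniqueness argument is in substance the paper's Step 4 of Section \ref{sec7}, with the Gram-matrix identity made explicit through the Stein identity rather than through the equality of the coefficients $CR_0^nC^{[*]}=DV^nD^{[*]}$; both correctly stop at a weak isomorphism and both correctly refrain from a closed-graph extension.

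One point needs fixing: as written, your verification of \eqref{rehovyavne} produces $\Phi(-z)$ rather than $\Phi(z)$. Expanding $(I+zR_0^{[*]})^{-1}=\sum_{n\ge0}(-1)^nz^n(R_0^{[*]})^n$ introduces the alternating signs that you silently drop when you ``recognize the power series of $\Phi$.'' This is largely inherited from the statement itself: the paper's own derivation yields $\Phi^\sharp(z)+i\,{\rm Im}\,\Phi(0)=\frac12C(I-zR_0)^{-1}(I+zR_0)C^{[*]}$, which upon taking adjoints places the inverse on the \emph{other} factor, $\Phi(z)=i\,{\rm Im}\,\Phi(0)+\frac12C(I+zR_0^{[*]})(I-zR_0^{[*]})^{-1}C^{[*]}$, consistent with the Dijksma--Langer--de Snoo formula quoted in the introduction. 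You should either prove that corrected formula or carry the signs honestly; in the proposal the computation and the claimed conclusion do not match. The same sign bookkeeping propagates into your uniqueness computation, where $D(I-zV)^{-1}D^{[*]}$ must be expressed through $\Phi^\sharp$; the collapse to $C_{\Phi^\sharp}(z,w)$ goes through once the realization formula is stated consistently.
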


\begin{proof} From \eqref{muriel} we have
(on a dense set)
\[
\langle TPf,Pg\rangle_P=\langle Pf, P\mathsf{M}_{b^{-1}}g\rangle_P.
\]

Thus $T^{[*]}Pg=P\mathsf{M}_{b^{-1}}g$ and so, using \eqref{solange}
\[
TT^{[*]}Pg=TP\mathsf{M}_{b^{-1}}g=TT^*g.
\]
Since in ${\mathbf H_{2,r}^-(\mathcal C)}$ we have $T^*=\mathsf{M}_{b^{-1}}$ we have $TT^*=I$ on the range of $I-\pi$ and so $TT^{[*]}P=I$ there.
It follows that $R_0R_0^{[*]}=I$ in $\mathcal L(\Phi^\sharp)$.\\

Let $C$ denote the evaluation at the origin. We have for $f\in\mathcal L(\Phi^\sharp)$ with power series expansion $f(u)=\sum_{n=0}^\infty f_nz^n$
\[
f_n=CR_0^nf
\]
and so
\[
f(z)=C(I-zR_0)^{-1}f
\]
Applying this equality to $f=C^{[*]}\xi$ we obtain
\[
((\Phi^\sharp(z)+(\Phi^\sharp)^{*}(0))=C(I-zR_0)^{-1}C^{[*]}.
\]
In particular $2{\rm Re}\,\Phi^\sharp(0)=CC^{[*]}$, and we have
\[
\begin{split}
\Phi^\sharp(z)+(\Phi^\sharp(0))^*&=C(I-zR_0)^{-1}C^{[*]}\\
\frac{1}{2}(\Phi^\sharp(0)+(\Phi^\sharp(0))^*)&=\frac{1}{2}CC^{[*]}
\end{split}
\]
and so
\[
\begin{split}
\Phi^\sharp(z)+\frac{1}{2}((\Phi^\sharp(0))^*-\Phi^\sharp(0))&=C(I-zR_0)^{-1}C^{[*]}-\frac{1}{2}CC^{[*]}\\
&=\frac{1}{2}C(I-zR_0)^{-1}(I+zR_0)C^{[*]}
\end{split}
\]
and \eqref{rehovyavne} follows.\\

The uniquenesss follows from the representation
\[
K(z,w)=C(I-zR_0)^{-1}(I-wR_0)^{-[*]}C^{[*]}
\]
for the reproducing kernel of $\mathcal L(\Phi^\sharp)$. A more detailed argument
is presented in the quaternionic setting at the end of the paper.
\end{proof}

\begin{remark}
\label{lastrem}
{\rm This remark is set in the notation of Theorem \ref{th55}.
One obtains the result for the case where $\mathcal C$ is a Krein space, endowed with a form $[\cdot,\cdot]_{\mathcal C}$ as follows. Let $J$ be a fundamental symmetry such that
$(\mathcal C,[J\cdot,\cdot]_{\mathcal C})$ is a Hilbert space. Note that
\begin{align}
[\xi,\eta]_{\mathcal C}&=\langle J\xi,\eta\rangle_{\mathcal C}\\
\nonumber
&\mbox{\hspace{-5.1cm}{\rm and}}
\\
[J\xi,\eta]_{\mathcal C}&=\langle \xi,\eta\rangle_{\mathcal C},\quad \xi,\eta\in\mathcal C.
\end{align}
Recall also that the Krein space adjoint and the Hilbert space adjoint of an operator
$X\in\mathbf L(\mathcal C,\mathcal C)$ are linked by
\begin{equation}
X^{[*]}=JX^*J.
\end{equation}
Applying Theorem \ref{th55} to the function $\Phi(z)J$ will lead to the Krein space result since
\[
\begin{split}
[\frac{\Phi^\sharp(z)+(\Phi^\sharp(w))^{[*]}}{1-z\overline{w}}\xi,\eta]_{\mathcal C}&=\langle\frac{\Phi^\sharp(z)+J(\Phi^\sharp(w))^{*}J}{1-z\overline{w}}\xi,J\eta\rangle_{\mathcal C}\\
&=\langle\frac{J\Phi^\sharp(z)+(\Phi^\sharp(w))^{*}J}{1-z\overline{w}}\xi,J\eta\rangle_{\mathcal C}\\
&=\langle\frac{(\Phi J)^\sharp(z)+((\Phi J)^\sharp(w))^{*}}{1-z\overline{w}}\xi,J\eta\rangle_{\mathcal C}.
\end{split}
\]
Note the operator $C$ can now be viewed as an operator between two Krein spaces, or as an operator from a Krein space into the Hilbert space $\mathcal C$. These adjoints are
related by multiplication by the operator $J$ on the right.}
\end{remark}

\section{The quaternionic setting: Slice hyperholomorphic functions}
\setcounter{equation}{0}
\label{quater}
In this section we collect some basic notations and notions useful in the sequel.
For more details, we refer the interested reader to the books \cite{zbMATH06658818,MR2752913,MR3013643}.
\\

By $\mathbb{H}$ we denote the algebra of real quaternions.
The imaginary units $i$, $j$ and $k$ in $\mathbb{H}$ satisfy $ijk=-1$, $i^2=j^2=k^2=-1$.
 An element in $\mathbb{H}$ is of the form $p=x_0+ix_1+jx_2+kx_3$, where $x_\ell\in \mathbb{R}$.
The real part, the imaginary part and the modulus of a quaternion are defined as
${\rm Re}(q)=x_0$, ${\rm Im}(q)=i x_1 +j x_2 +k x_3$, $|q|^2=x_0^2+x_1^2+x_2^2+x_3^2$, respectively.
The conjugate of the quaternion $q=x_0+ix_1+jx_2+kx_3$ is
$\bar q={\rm Re }(q)-{\rm Im }(q)=x_0-i x_1-j x_2-k x_3$
 and it satisfies
$$
|q|^2=q\bar q=\bar q q.
$$
By $\mathbb S$ we denote the unit sphere of purely imaginary quaternions
$$
\mathbb{S}=\{q=ix_1+jx_2+kx_3\ {\rm such \ that}\
x_1^2+x_2^2+x_3^2=1\}.
$$
Note that if $I\in\mathbb{S}$, then
$I^2=-1$; for this reason the elements of $\mathbb{S}$ are also called
imaginary units.
Given a nonreal quaternion $q=x_0+{\rm Im} (q)=x_0+I |{\rm Im} (q)|$, $I={\rm Im} (q)/|{\rm Im} (q)|\in\mathbb{S}$, we can associate to it the 2-dimensional sphere defined by
$$
[q]=\{x_0+I  {\rm Im} (q)|\  :\  \ I\in\mathbb{S}\}.
$$
This sphere has center at the real point $x_0$ and radius $|{\rm Im} (q)|$.
An element in the complex plane $\mathbb{C}_I=\mathbb{R}+I\mathbb{R}$ is denoted by $x+Iy$.
\\
A subset $\Omega$ of $\mathbb H$ is said to be axially symmetric if $x+Jy\in\Omega$ for all $J\in\mathbb S$ whenever $x+Iy\in\Omega$ for some $I\in\mathbb S$.
\\

Let $p$ be a quaternion. It can be written as
\[
q=z_1+z_2j
\]
where $z_1=x_0+ix_1$ and $z_2=x_2+ix_3$ are complex numbers. The map
\begin{equation}
\label{chixhixhi}
\chi(q)=\begin{pmatrix}z_1&z_2\\-\overline{z_2}&\overline{z_1}\end{pmatrix}
\end{equation}
is a skew-field homomorphism, extended to quaternionic matrices $M=A+jB$ (with $M\in\mathbb H^{n\times n}$ and $A,B\in\mathbb C^{n\times n}$) by
\begin{equation}
\label{chi890}
\chi(M)=\begin{pmatrix}A&B\\
-\overline{B}&\overline{A}\end{pmatrix}.
\end{equation}

\begin{remark}{\rm 
The map $\chi$ can furthermore be extended to bounded
operators, a fact we use in the proof of Theorem \ref{th56}. We note that
\begin{equation}
\label{chi891}
\chi(M)\ge 0\quad\iff\quad M\ge 0
\end{equation}
(both for a matrix or an operator)
and that
\[
M_1\le M_2\quad\iff \chi(M_1)\le \chi(M_2)
\]
for hermitian matrices (or operators) $M_1$ and $M_2$.}
\label{karl}
\end{remark}

We now introduce the notion of slice hyperholomorphic functions with values in a two-sided quaternionic Banach space $\mathcal B$.  In particular, the definition includes the case of functions with values in $\mathbb H$, see \cite{MR2737796}.
\begin{definition}
Let $\mathcal B$ be a two-sided quaternionic Banach space and $\Omega$ be an axially symmetric open set.
Let $f$ be a function of the form
$f(p)=f(x+{I}y)=\alpha (x,y) +{I}\beta (x,y)$ where $\alpha, \beta:
\Omega\to \mathcal{B}$ depend only on $x,y$, are real differentiable, satisfy the Cauchy-Riemann
equations
\begin{equation}\label{CR1}
\begin{cases}
\partial_x \alpha -\partial_y\beta=0\\
\partial_y \alpha
+\partial_x\beta=0,
\end{cases}
\end{equation}
and let us assume
\begin{equation}\label{alfabeta1}
\begin{split}
\alpha(x,-y)=\alpha(x,y),\qquad
\beta(x,-y)=-\beta(x,y).
\end{split}
\end{equation}
Then $f$ is said to be left slice hyperholomorphic. If, under the same hypothesis, $f$ is of the form
$f(p)=f(x+{I}y)=\alpha (x,y) +\beta (x,y)I$, then it is said to be right slice hyperholomorphic.
\end{definition}

The class of slice hyperholomorphic quaternionic valued functions is important since power series centered at real points are slice hyperholomorphic. Let us denote by
$\mathbb B_R$ the open ball centered at $0$ and radius $R>0$. A function
$f:\,\mathbb B_R \to \mathcal{B}$ is  left slice regular if and only if $f$ admits power series expansion
$$
f(q)=\sum_{m=0}^{+\infty} q^m f_m,\quad f_m\in\mathcal B
$$
converging on $\mathbb B_R$.
\\
The pointwise multiplication of two slice regular functions is not slice regular, in general. Instead, we introduce the following product:
\begin{definition}
Let $\Omega\subseteq\mathbb{H}$ be an axially symmetric and let
$f,g:  \Omega\to \mathcal{B}$ be slice hyperholomorphic functions with values in a two sided  quaternionic Banach algebra $\mathcal B$. Let
$f(x+{I}y)=\alpha(x,y)+{I}\beta(x,y)$,
$g(x+{I}y)=\gamma(x,y)+{I}\delta(x,y)$. Then we define
\begin{equation}\label{opleft}
(f\star g)(x+{I}y):= (\alpha\gamma -\beta \delta)(x,y)+
{I}(\alpha\delta +\beta \gamma )(x,y).
\end{equation}
Similarly we can define a $\star_r$ product between right slice hyperholomorphic functions.
\end{definition}
It can be verified that the function $f\star g$ is slice
hyperholomorphic.
\begin{remark}{\rm We note that  if $f(p)=\sum_{n= 0}^\infty
p^n f_n$ and $g(p)=\sum_{n= 0}^\infty p^n f_n$, with $f_n,g_n\in \mathcal{B}$ for all
$n$, then
\[
(f\star g)(p):=\sum_{n= 0}^\infty p^n (\sum_{r=0}^n
f_rg_{n-r}).
\]
}
\end{remark}
For scalar valued functions, it is possible to define an inverse with respect to the (left or right) $\star$-product. In this paper we are only interested in defining the $\star_r$-inverse of a function of the form $I-p T$ and we will limit ourselves to this case.
\\
If we consider the function $(1-pq)^{-\star_r}$ and we use the functional calculus, we can define
$(1-pT)^{-\star_r}$. Note that for $p\not=0$
$$
(1-pT)^{-\star_r}=p^{-1}S_R(p,T)=-p^{-1}(T-\overline{s}{I})(T^2-2 {\rm Re}\, ( s) T+|s|^2{I})^{-1},
$$
moreover
$$
(1-pT)^{-\star_r}=\sum_{n\geq 0} p^nT^n \qquad {\rm for} \ \ |p|\|T\|<1 .
$$
For the sake of simplicity,  in the sequel we will write $(1-sT)^{-\star}$.

\section{The quaternionic setting:  The realization theorem}
\setcounter{equation}{0}
\label{quater2}
Our goal is to prove the counterpart of Theorem \ref{th55} in the quaternionic setting, in the framework of slice hyperholomorphic functions. The coefficient space $\mathcal C$
is now a two-sided quaternionic Hilbert space (the case when $\mathcal C$ is a two-sided Krein space is considered at the end of the section). The inner product in
$\mathcal C$ is moreover assumed to satisfy the following condition:
\begin{equation}
\label{conditioncoluche}
\langle c, qd\rangle_{\mathcal C}=\langle \overline{q} c ,d\rangle_{\mathcal C},\quad \forall c,d\in\mathcal C\quad{\rm and}\quad \forall q\in\mathbb H.
\end{equation}
We note that, in general, one has Hilbert spaces on one side, say  on the right. By fixing a Hilbert basis it is possible to define a multiplication by a scalar also on the left, thus showing the Hilbert space as a two-sided quaternionic vector space.
\\
Our starting point is a
$\mathbf L(\mathcal C,\mathcal C)$-valued function $\Phi$, left slice hyperholomorphic in $p$ in ${\mathbb B_r}$, and bounded in $\overline{\mathbb B_{r_0}}$ where $r_0>r$. The kernel $C_\Phi$ of the
complex setting now becomes
\begin{equation}
\label{kphi}
K_\Phi(p,q)=\sum_{u,v=0}^\infty p^u(\Phi(p)+\Phi(q)^*)\overline{q}^v.
\end{equation}
Note that this kernel can be written in closed form as
$$
K_\Phi(p,q)=(\Phi(p)+\Phi(q)^*)\star(1-p\overline{q})^{-\star}
$$
where the $\star$-multiplication is computed in the variable $p$.
We set
\[
\Phi^\sharp(p)=\Phi_0^*+p\Phi_1^*+p^2\Phi_2^*+\cdots
\]
 and we build a reproducing kernel Krein space $\mathcal L(\Phi^\sharp)$ with reproducing kernel $K_{\Phi^\sharp}$; see Definition \ref{65}.
With this space $\mathcal L(\Phi^\sharp)$ at hand, the main result of the section, whose proof is postponed to section 7, is:

\begin{theorem}
\label{th56}
Let $\mathcal C$ be a two-sided quaternionic Hilbert space
and let $\Phi$ be a $\mathbf L(\mathcal C,\mathcal C)$-valued function slice hyperholomorphic in
$\mathbb B_{r_0}$ with $r_0<1$ and continuous in $|p|\le r_0$ in the operator topology. We set
\[
\Phi(p)=\Phi_0+p\Phi_1+p^2\Phi_2+\cdots
\]
The space $\mathcal L(\Phi^\sharp)$ is $R_0$ invariant and $R_0$ is coisometric in this space.
Furthermore
Let $C$ denote the point evaluation at the origin. It is a bounded linear operator from the Krein space $\mathcal L(\Phi^\sharp)$ into the Hilbert space $\mathcal C$; denote
its adjoint by $C^{[*]}$. We have
\begin{equation}
\label{rehovyavne1}
\Phi^\sharp (p)=-i{\rm Im}\,\Phi(0)+\frac{1}{2}C\star (I-pR_0)(I+pR_0)^{-\star}C^{[*]} .
\end{equation}
Finally any such realization
\begin{equation}
\label{real2}
\Phi^\sharp (p)=-i{\rm Im}\,\Phi(0)+\frac{1}{2}G\star (I-pV)(I+V)^{-\star}G^{[*]}
\end{equation}
with a coisometric operator defined in a quaternionic Krein space $\mathcal K$,
and closely outer connected, meaning that the span of the operators $(I-pV^{[*]})^{-\star}D^{[*]}$ is dense in the space, is unique up to a weak isomorphism.
\end{theorem}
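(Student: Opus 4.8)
The plan is to mimic the strategy used for Theorem \ref{th55} in the complex case, transporting everything to the slice hyperholomorphic setting via the Cauchy--Riemann system and the map $\chi$ of \eqref{chixhixhi}--\eqref{chi890}. First I would introduce the quaternionic analogue $\mathbf H_{2,r}^-(\mathcal C)$ of the space of Definition \ref{shulamit}, consisting of series $f(p)=\sum_{u\ge 1}p^{-u}f_u$ with $\sum_u R^{2u}\|f_u\|_{\mathcal C}^2<\infty$; condition \eqref{conditioncoluche} is what makes the pairing $\langle f,g\rangle=\sum_u R^{2u}\langle f_u,g_u\rangle_{\mathcal C}$ quaternion-linear in the correct one-sided sense. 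Next, in analogy with \eqref{phedre}, define the Hermitian form $[f,g]_\Phi$ as a double slice integral against $K_\Phi$, prove its continuity exactly as in Proposition \ref{illusion} (here the computation in Proposition \ref{illusion} is precisely what is needed, as was flagged in the text), and invoke the quaternionic Riesz representation theorem to get a bounded self-adjoint $P$ on $\mathbf H_{2,r}^-(\mathcal C)$ whose matrix is $\mathrm{Re}\,T_\Phi$ as in \eqref{ttt}. Boundedness of $P$ is where one uses \eqref{chi891}: passing through $\chi$ turns the block-Toeplitz operator into a complex one whose norm is controlled by $M/(1-r_0^2)$, and the equivalences in Remark \ref{karl} let one pull inequalities back.

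Then I would run the operator-range machinery of Section \ref{lydia} verbatim: form $|P|^{1/2}$, equip $\mathrm{ran}\,|P|^{1/2}$ with \eqref{shulamit111}--\eqref{eglantine}, and via the associated transform \eqref{widehat} with $g_p:\,b\mapsto (b-\overline p)^{-\star}$ (the $\star_r$-resolvent kernel of Section \ref{quater}) produce the reproducing kernel Krein space $\mathcal L(\Phi^\sharp)$ whose kernel is $K_{\Phi^\sharp}$ (this is Definition \ref{65}). The backward-shift $R_0$ and the operator $T:\,f\mapsto pf(p)-f_1$ satisfy $PT=\mathsf{M}_{b^{-1}}P$ and $T^*=\mathsf{M}_{b^{-1}}$ on $\mathrm{ran}(I-\pi)$ exactly as in \eqref{solange}; repeating the Loewner-theorem argument of Proposition \ref{closed} — which transports through $\chi$ since Loewner's theorem holds for complex self-adjoint operators and operator inequalities are $\chi$-equivariant by Remark \ref{karl} — gives that $R_0$ is bounded and $R_0R_0^{[*]}=I$, i.e. $R_0$ is coisometric. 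The realization formula \eqref{rehovyavne1} then follows by the same computation as for \eqref{rehovyavne}: for $f\in\mathcal L(\Phi^\sharp)$ one has $f_n=CR_0^nf$, hence $f(p)=C\star(1-pR_0)^{-\star}f$, and applying this to $f=C^{[*]}\xi$ together with $2\,\mathrm{Re}\,\Phi^\sharp(0)=CC^{[*]}$ yields $\Phi^\sharp(p)+\tfrac12(\Phi^\sharp(0)^*-\Phi^\sharp(0))=\tfrac12 C\star(I-pR_0)(I+pR_0)^{-\star}C^{[*]}$, which is \eqref{rehovyavne1} since $\tfrac12(\Phi^\sharp(0)^*-\Phi^\sharp(0))=i\,\mathrm{Im}\,\Phi(0)$.

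For the uniqueness statement I would argue from the kernel identity: a coisometric, closely outer connected realization $(G,V,\mathcal K)$ forces $K_{\Phi^\sharp}(p,q)=G\star(I-pV)^{-\star}((I-qV)^{-\star})^{[*]}\star_r G^{[*]}$, so the map $(I-pV)^{-[*]}G^{[*]}\xi\mapsto K_{\Phi^\sharp}(\cdot,q)\xi$ extends to a densely defined isometry onto a dense subset of $\mathcal L(\Phi^\sharp)$; since both spaces are Krein spaces this extends to a weak isomorphism intertwining $V$ with $R_0$ and $G$ with $C$. The main obstacle I anticipate is not any single deep step but the bookkeeping of sidedness: slice functions are built from $\star$-products (computed in one variable), the Hilbert module is one-sided with the compatibility \eqref{conditioncoluche}, and one must check at each stage that the relevant operators are genuinely (right-)linear and that adjoints are taken in the right category — in particular making precise the statement "unique up to a weak isomorphism" for quaternionic Krein spaces, and verifying that the $\star_r$-resolvent $(1-p\overline q)^{-\star}$ really reproduces $K_{\Phi^\sharp}$ under the transform \eqref{widehat}. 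The $\chi$-transform handles the analytic (boundedness, Loewner) estimates cleanly, so the genuine work is algebraic consistency rather than hard analysis.
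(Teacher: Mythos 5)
Your overall architecture matches the paper's: operator ranges and the associated transform of Section \ref{lydia}, boundedness of the backward shift via Loewner's theorem transported through $\chi$ (Remark \ref{karl}), the coisometry identity $R_0R_0^{[*]}=I$, the power-series computation giving \eqref{rehovyavne1}, and uniqueness via densely defined isometric relations. But there is one substantive misstep at the foundation. You propose to define $[f,g]_\Phi$ ``as a double slice integral against $K_\Phi$, in analogy with \eqref{phedre}'' and to prove continuity ``exactly as in Proposition \ref{illusion}.'' The paper deliberately does \emph{not} do this, and for good reason: in the quaternionic setting the kernel $K_\Phi(p,q)$ is only defined through a $\star$-product (it is not a pointwise quotient), the variable does not commute with the quaternionic Taylor coefficients $f_u$, and the Cauchy-formula manipulations that drive the complex-case computations (the analogues of \eqref{johanna} and \eqref{elizabeth}) do not survive a contour integral over a slice. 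This is precisely why Proposition \ref{illusion} is included in the complex part --- the paper states explicitly that the coefficient formula ``is not needed to prove Theorem \ref{th55}, but will be crucial in the quaternionic setting'' --- and in Section \ref{quater2} the form is \emph{defined} directly by the coefficient expression \eqref{hermi}, with $P={\rm Re}\,T_\Phi$ read off from \eqref{ttt}. Note also that Proposition \ref{illusion} proves no continuity; in the quaternionic case continuity of \eqref{hermi} is obtained by a genuinely different argument (separate continuity in each variable plus the uniform boundedness theorem), not by the Cauchy--Schwarz integral estimate of the complex case.

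Two smaller points. First, the paper sidesteps your worry about whether $(b-\overline p)^{-\star}$ ``really reproduces $K_{\Phi^\sharp}$'' by working on the sequence side: the family $g_a$ is defined on $\ell_{2,r}(\mathbb N,\mathcal C)$ by \eqref{two}, transported by the unitary map \eqref{place_d_italie}, and the reproducing kernel property is verified by an explicit coefficient computation using \eqref{conditioncoluche}; the shift relation is Proposition \ref{sabine}, again checked on coefficients. Second, in the uniqueness step you say the densely defined isometry ``extends to a weak isomorphism''; the paper is careful that in the Krein space setting the maps $W_1,W_2$ in general do \emph{not} extend to continuous unitaries --- the weak isomorphism \emph{is} the pair of densely defined isometries with dense ranges satisfying $W_1W_2=I$ and $W_2W_1=I$ on dense subspaces, and no extension is claimed. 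With the form redefined on coefficients as in \eqref{hermi}, the rest of your plan goes through essentially as the paper does it.
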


 The spectral theorem for bounded Hermitian
operator is still true in the quaternionic setting (see \cite[\S 8, p. 57]{acs_OT_244}), and it follows that the results of Section \ref{lydia} still hold for quaternionic
bounded Hermitian operators. More precisely, if
\[
P=\int_{\mathbb R} \lambda dE(\lambda)
\]
is the integral representation of $P$ along its spectral measure $E$, we set
\[
\begin{split}
|P|&=\int_{\mathbb R} |\lambda| dE(\lambda)\\
|P|^{1/2}&=\int_{\mathbb R} \sqrt{|\lambda|} dE(\lambda)\\
\sigma&=\int_{\mathbb R\setminus\left\{0\right\}} \frac{\lambda}{|\lambda|} dE(\lambda).
\end{split}
\]

We define $\ell_{2,r}(\mathbb N,\mathcal C)$ to be the space of vectors
\[
{\mathbf f}=\begin{pmatrix}f_1\\ f_2\\ \vdots\end{pmatrix}\in{\mathcal C}^{\mathbb N}
\]
such that \eqref{concorde} holds, i.e.:
\begin{equation*}
\sum_{u=1}^\infty R^{2u}\|f_u\|_{\mathcal C}^2<\infty.
\end{equation*}

To such a sequence one associates the function
\begin{equation}
\label{place_d_italie}
f(p)=\sum_{u=1}^\infty p^{-u}f_u
\end{equation}
which is left slice hyperholomorphic in $|p|>r$. As in the complex case we denote this space by $\mathbf H^-_{2,r}(\mathcal C)$.\\

We define an Hermitian form as in Proposition \ref{illusion}:
\begin{equation}
\label{hermi}
[f,g]_\Phi=\sum_{v=1}^\infty\sum_{u=1}^v \langle \Phi_{v-u}f_v,g_u\rangle_{\mathcal C}+
\sum_{u=1}^\infty\sum_{v=1}^u \langle \Phi^*_{u-v}f_v,g_u\rangle_{\mathcal C}.
\end{equation}

\begin{proposition}
The form \eqref{hermi} is jointly continuous on $\ell_{2,r}(\mathbb N,\mathcal C)\times \ell_{2,r}(\mathbb N,\mathcal C)$.
\end{proposition}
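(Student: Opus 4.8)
The plan is to reduce this statement to its complex counterpart, Proposition \ref{bois_de_vincennes}, via the matrix representation already computed in the complex case. Recall from \eqref{ttt} that the form \eqref{hermi} is precisely $\langle (\mathrm{Re}\,T_\Phi){\mathbf f},{\mathbf g}\rangle_{\ell_{2,r}(\mathbb N,\mathcal C)}$, where $T_\Phi$ is the lower-triangular block-Toeplitz matrix with diagonal blocks $\Phi_0$ and subdiagonal blocks $2\Phi_u^*$. So the content of the proposition is exactly that this block-Toeplitz operator is bounded on $\ell_{2,r}(\mathbb N,\mathcal C)$, now over $\mathbb H$. First I would fix the identification: to a sequence $\mathbf f\in\ell_{2,r}(\mathbb N,\mathcal C)$ associate the slice hyperholomorphic function $f(p)=\sum_{u\ge 1}p^{-u}f_u$ in $\mathbf H^-_{2,r}(\mathcal C)$, and note that the estimate \eqref{concorde} defining membership is the same inequality in both settings.

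The key step is the Cauchy-type estimate. For $f,g\in\mathbf H^-_{2,r}(\mathcal C)$ one writes $[f,g]_\Phi$ as a double contour integral of $\langle K_\Phi(a,b)f(a),g(b)\rangle_{\mathcal C}$ over $|a|=|b|=r$ against $\mathbb C_I$, exactly as in \eqref{phedre}, using the closed form $K_\Phi(p,q)=(\Phi(p)+\Phi(q)^*)\star(1-p\overline q)^{-\star}$ and the bound $\|K_\Phi(a,b)\|\le 2M/(1-r_0^2)$ coming from $M=\max_{|p|\le r_0}\|\Phi(p)\|$ and $|a\overline b|=r^2<r_0^2$. Then I would repeat verbatim the estimate in the proof of Proposition \ref{bois_de_vincennes}: bound the integrand by $\frac{2M}{1-r_0^2}\|f(a)\|\,\|g(b)\|$, expand $\|f(a)\|\le\sum_u r^u\|f_u\|$ and likewise for $g$, and apply Cauchy–Schwarz in the form $\sum_u r^u\|f_u\|=\sum_u r^{2u}R^u\|f_u\|\le\big(\sum_u r_0^{4u}\big)^{1/2}\big(\sum_u R^{2u}\|f_u\|^2\big)^{1/2}$. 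This yields
\[
|[f,g]_\Phi|\le\frac{2Mr_0^2}{(1-r_0^2)^2}\Big(\sum_{u\ge1}R^{2u}\|f_u\|^2\Big)^{1/2}\Big(\sum_{u\ge1}R^{2u}\|g_u\|^2\Big)^{1/2},
\]
which is joint continuity.

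I expect the only genuine obstacle to be bookkeeping of the quaternionic noncommutativity in the contour-integral representation: one must check that the integral over $\mathbb C_I$ of $K_\Phi(a,b)$ against $a^{-v}\overline b^{-u}$ reproduces the coefficient pattern in \eqref{hermi}, and that condition \eqref{conditioncoluche} together with the convention $(1-p\overline q)^{-\star}=\sum p^n\overline q^n$ makes all the scalar factors commute past the operators $\Phi_u,\Phi_u^*$ in the right order. An alternative, fully algebraic route avoids integrals entirely: apply the homomorphism $\chi$ of \eqref{chi890}–\eqref{chi891} to the block-Toeplitz matrix $\mathrm{Re}\,T_\Phi$, observe that $\chi(\mathrm{Re}\,T_\Phi)$ is (up to the standard rearrangement of the $\ell_2$ basis) a complex block-Toeplitz operator built from $\chi(\Phi_0)$ and $\chi(\Phi_u^*)$ with the same structural bound $M$, invoke the complex Proposition \ref{bois_de_vincennes} for that operator, and pull the resulting norm bound back through the order-preserving property $M_1\le M_2\iff\chi(M_1)\le\chi(M_2)$ of Remark \ref{karl}. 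Either way the estimate transfers, and I would present whichever is shorter in the final text — most likely the direct Cauchy estimate, since all the needed inequalities are already displayed in Section \ref{sec3}.
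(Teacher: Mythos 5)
Your primary route (the slice contour integral) does not work, and the obstruction you defer as ``bookkeeping'' is in fact fatal. The form \eqref{hermi} is defined directly on coefficients, and it is \emph{not} equal to the slice integral $\frac{1}{4\pi^2}\iint_{|a|=|b|=r}\langle K_\Phi(a,b)f(a),g(b)\rangle_{\mathcal C}\,da\,d\overline{b}$ that you want to import from \eqref{phedre}. The orthogonality relations driving the complex computation of \eqref{sofsoi} require collecting all powers of $a$ into a single exponent before integrating, and in the quaternionic setting those powers are trapped on both sides of the right-linear operators $\Phi_s$: in a term such as $a^{t+s}\Phi_s(\overline{b}^{\,t}a^{-v}f_v)$ you cannot move $a^{-v}$ past $\Phi_s$ or past $f_v$, so the $\theta$-integral does not produce $\delta_{t+s-v,-1}$. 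Concretely, take $\mathcal C=\mathbb H$ with $\langle c,d\rangle=\overline{d}c$, $\Phi(p)=p\Phi_1$ with $\Phi_1c=jc$, $f(a)=a^{-2}$, $g(b)=b^{-1}$, and integrate over the slice $\mathbb C_i$: the coefficient form gives $\langle\Phi_1f_2,g_1\rangle=j\neq 0$, while in the integrand the identity $j\,\overline{b}^{\,t}a^{-2}=b^{t}\overline{a}^{-2}j$ shifts the exponent of $a$ from $t-1$ to $t+3$ and the $a$-integral vanishes for every $t\ge 0$. Condition \eqref{conditioncoluche} does not rescue this, since it only moves scalars across the inner product, not across the operators $\Phi_s$; so the Cauchy--Schwarz estimate leading to \eqref{bois_de_vincennes} cannot be transported by this route.

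Your fallback via the homomorphism $\chi$ is viable and is in the spirit of what the paper does elsewhere (Step 1 of the proof of Theorem \ref{th56}), but as written it is only a sketch: you must apply $\chi$ entrywise to the block matrix \eqref{ttt}, check that after the standard rearrangement of the basis you obtain the complex block-Toeplitz operator attached to $\tilde\Phi(z)=\sum_u z^u\chi(\Phi_u)$, and verify that $\tilde\Phi$ satisfies the hypotheses of the complex result on a closed disc of some radius $r_1\in(r,r_0)$; this last point needs the slice Cauchy estimate $\|\Phi_u\|r_1^u\le M_{r_1}$, which holds because the relevant scalar $e^{-Iu\theta}$ sits entirely to the left of $\Phi(r_1e^{I\theta})$. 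The paper itself takes a third route that avoids both of yours: it proves \emph{separate} continuity of \eqref{hermi} in each variable by direct coefficient estimates based on $\|\Phi_u\|r^u\le M_r$, and then invokes the uniform boundedness principle (valid in quaternionic Hilbert spaces) to upgrade separate continuity to joint continuity. You should either complete the $\chi$ reduction or argue directly on the coefficients; the contour-integral representation must be abandoned.
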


\begin{proof}
We first consider the form
\begin{equation}
\label{hermi1}
[f,g]_1=\sum_{v=1}^\infty\sum_{u=1}^v \langle \Phi_{v-u}f_v,g_u\rangle_{\mathcal C}.
\end{equation}
The series $\sum_{u=0}^\infty p^u\Phi_u$ converges in the operator norm in $|z|\le r_0$. Hence, given $0<r<r_0$ there exists $M_r$
be such that
\begin{equation}
\|\Phi_v\|r^v\le M_r.
\end{equation}

We have
\begin{equation}
\label{wow}
\begin{split}
\big|\sum_{v=1}^\infty\sum_{u=1}^v \langle \Phi_{v-u}f_v,g_u\rangle_{\mathcal C}\big|&\le\sum_{v=1}^\infty\sum_{u=1}^v\|\Phi_{v-u}\|\cdot\|f_v\|_{\mathcal C}\cdot
\|g_u\|_{\mathcal C}\\
&=\sum_{u=1}^\infty\|g_u\|_{\mathcal C}\left(\sum_{v=u}^\infty \|\Phi_{v-u}\|\cdot\|f_v\|_{\mathcal C}\right)\\
&\le \sum_{u=1}^\infty\|g_u\|_{\mathcal C}\left(\sum_{v=u}^\infty M\cdot R^{v-u}\|f_v\|_{\mathcal C}\right)\\
&=M_r\sum_{u=1}^\infty\|g_u\|_{\mathcal C}(R^ur^u)r^u\left(\sum_{v=u}^\infty M\cdot (r^vR^v)R^v\|f_v\|_{\mathcal C}\right)\\
&\le M_r\|g\|_{\mathbf H_{2,r}^-(\mathcal C)}\cdot\sqrt{\frac{1}{1-r^2}}\cdot K_{f,r}
\end{split}
\end{equation}
with
\[
K_{f,r}=\sqrt{\frac{1}{1-r}}\left(\sum_{v=1}^\infty R^{4v}\|f_v\|^2\right)^{1/2}.
\]
Thus the form \eqref{hermi1} is continuous in $g$. The above inequalities will not show that the form is jointly continuous since $K_{f,r}$ is not the
${\mathbf H}_{2,r}^-(\mathcal C)$-norm of $f$ (notice a term $R^4$ rather than $R^2$ appearing in $K_{f,r}$). Rewriting the first line in \eqref{wow} as
\[
\begin{split}
\big|\sum_{v=1}^\infty\sum_{u=1}^v \langle \Phi_{v-u}&f_v,g_u\rangle_{\mathcal C}\big|\le\sum_{v=1}^\infty\sum_{u=1}^v\|\Phi_{v-u}\|\cdot\|f_v\|_{\mathcal C}\cdot\|g_u\|_{\mathcal C}\\
&=\sum_{v=1}^\infty\|f_v\|_{\mathcal C}\cdot\left(\sum_{u=1}^v\|\Phi_{v-u}\|\cdot\|g_u\|_{\mathcal C}\right)\\
&\le \sum_{v=1}^\infty\|f_v\|_{\mathcal C}\\
&\le M_r\sum_{v=1}^\infty\|f_v\|_{\mathcal C}\cdot r^v(r^vR^v) \left(\sum_{u=1}^v\|R^u(R^ur^u)\|\cdot\|g_u\|_{\mathcal C}\right)
\end{split}
\]
and hence, by the same argument, the form \eqref{hermi1} is continuous in $f$. It follows from the uniform boundedness theorem (which holds in quaternionic Hilbert spaces;
see \cite[Theorem 3.3 p. 39]{acs_OT_244}) that the form \eqref{hermi} is bounded.
\end{proof}

\begin{remark}{\rm
For a general result in the complex setting on separately continuous forms which are jointly continuous, see \cite[IV.26, Theorem 2]{MR83k:46003} and the discussion
in \cite[p. 216]{MR3404695}.}
\end{remark}

As in the complex setting case, Riesz representation theorem implies the existence of a bounded operator, which we still call $P$, such that
\begin{equation}
\label{610}
[f,g]_\Phi=\langle Pf,g\rangle_{\ell_{2,r}(\mathbb N,\mathcal C)}.
\end{equation}
As in the previous section, we have $P={\rm Re}\, T_\Phi$, where the block-Toeplitz operator $T_\Phi$ is defined by \eqref{ttt}.

\begin{proposition}
\label{sabine}
Let $\mathsf{M}_{p^{-1}}$ and $T$ be defined in the Hardy space of slice hyperholomorphic functions $\mathbf H^-_{2,r}(\mathcal C)$ by
\begin{align}
(\mathsf{M}_{p^{-1}}f)(p)&=p^{-1}f(p)\,\,\hspace{6.2mm}=\,\,\sum_{n=2}^\infty p^{-u}f_{u-1}\\
(Tf)(p)&=pf(p)-f_1\,\hspace{2.8mm}=\,\,\,\sum_{u=1}^\infty p^{-u}f_{u+1}.
\end{align}
Then,
\begin{equation}
\label{613}
[Tf,g]_\Phi=[f,\mathsf{M}_{p^{-1}}g]_\Phi.
\end{equation}
\end{proposition}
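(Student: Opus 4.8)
The plan is to mimic the computation carried out in Proposition~\ref{illusion} and its complex-variable analogue (Proposition with formula \eqref{elizabeth}), but now at the level of the series expression \eqref{hermi} for $[\cdot,\cdot]_\Phi$ rather than the contour-integral form. First I would write down explicitly the two sequences involved: if $f$ has coefficients $(f_u)_{u\geq 1}$ then $Tf$ has coefficients $(f_{u+1})_{u\geq 1}$, and $\mathsf{M}_{p^{-1}}g$ has coefficients $(g_{u-1})_{u\geq 1}$ with the convention $g_0=0$. Then \eqref{613} becomes the purely algebraic identity
\begin{equation*}
\sum_{v=1}^\infty\sum_{u=1}^v \langle \Phi_{v-u}f_{v+1},g_u\rangle_{\mathcal C}+\sum_{u=1}^\infty\sum_{v=1}^u \langle \Phi^*_{u-v}f_{v+1},g_u\rangle_{\mathcal C}
=\sum_{v=1}^\infty\sum_{u=1}^v \langle \Phi_{v-u}f_v,g_{u-1}\rangle_{\mathcal C}+\sum_{u=1}^\infty\sum_{v=1}^u \langle \Phi^*_{u-v}f_v,g_{u-1}\rangle_{\mathcal C},
\end{equation*}
which I would verify by re-indexing. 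In each sum on the left, substitute $v'=v+1$; in each sum on the right substitute $u'=u-1$ (recalling $g_0=0$ kills the $u=1$ term). After these shifts both sides become, up to renaming, the same double series indexed by pairs $(u,v)$ with $1\le u\le v$ (respectively $1\le v\le u$) and summand depending only on the difference of indices, so they match term by term.

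The key point that makes the bookkeeping clean is that the summand $\langle\Phi_{v-u}f_v,g_u\rangle_{\mathcal C}$ in \eqref{hermi} depends on the two multi-indices only through $f_v$, $g_u$ and the gap $v-u$, and both $T$ and $\mathsf{M}_{p^{-1}}$ act as index shifts in opposite directions; hence a single re-indexing in each of the four double sums converts the left-hand side of \eqref{613} into the right-hand side. I would present the verification for the first sum (the $\Phi_{v-u}$ part) in detail and remark that the second sum (the $\Phi^*_{u-v}$ part) is handled identically with the roles of the two index ranges interchanged, so that the $\Phi$-part of $[Tf,g]_\Phi$ matches the $\Phi$-part of $[f,\mathsf{M}_{p^{-1}}g]_\Phi$ and likewise for the $\Phi^*$-part.

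Before running the re-indexing I would note that all the series in question are absolutely convergent: $Tf$ and $\mathsf{M}_{p^{-1}}g$ lie in $\mathbf H_{2,r}^-(\mathcal C)$ (equivalently $\ell_{2,r}(\mathbb N,\mathcal C)$), since the shift operators are bounded on that space (the coefficient sequences satisfy \eqref{concorde}), and $[\cdot,\cdot]_\Phi$ was just shown to be jointly continuous on $\ell_{2,r}(\mathbb N,\mathcal C)$; therefore the rearrangements of the double series are legitimate. The main obstacle — such as it is — is purely notational: keeping track of the two separate double-index regions $\{u\le v\}$ and $\{v\le u\}$ under the shifts, making sure the boundary term (the $u=1$ term, where $g_0=0$) is correctly dropped, and confirming that no term is created or lost when $v\mapsto v+1$ pushes the summation range. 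There is no analytic difficulty; once the sequences are written out and the convergence is invoked, the identity is a one-line re-indexing in each of the four sums, entirely parallel to the complex-variable proof of \eqref{elizabeth}.
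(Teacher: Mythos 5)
Your strategy is exactly the one the paper intends (the paper states only that ``the proof is a direct computation on the coefficients'' and omits it), but the crucial assertion in your argument --- that after the shifts ``both sides become, up to renaming, the same double series'' and that ``no term is created or lost'' --- fails for the $\Phi^*$-part, and you never actually carry out the check. Do your own re-indexing: in the second double sum of $[Tf,g]_\Phi$ the substitution $v'=v+1$ turns $\sum_{u\ge1}\sum_{v=1}^{u}\langle\Phi^*_{u-v}f_{v+1},g_u\rangle_{\mathcal C}$ into $\sum_{u\ge1}\sum_{v'=2}^{u+1}\langle\Phi^*_{u+1-v'}f_{v'},g_u\rangle_{\mathcal C}$, whereas the second double sum of $[f,\mathsf{M}_{p^{-1}}g]_\Phi$, after $u'=u-1$ and discarding the $u'=0$ term (where $g_0=0$), is $\sum_{u'\ge1}\sum_{v=1}^{u'+1}\langle\Phi^*_{u'+1-v}f_v,g_{u'}\rangle_{\mathcal C}$. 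The inner ranges differ precisely at $v=1$, leaving the residual
\[
[f,\mathsf{M}_{p^{-1}}g]_\Phi-[Tf,g]_\Phi=\sum_{u=1}^\infty\langle\Phi^*_u f_1,g_u\rangle_{\mathcal C},
\]
which is not zero in general: take $\Phi(p)=p\Phi_1$, $f(p)=p^{-1}f_1$ (so that $Tf=0$) and $g(p)=p^{-1}g_1$; then $[Tf,g]_\Phi=0$ while $[f,\mathsf{M}_{p^{-1}}g]_\Phi=\langle\Phi_1^*f_1,g_1\rangle_{\mathcal C}$. The $\Phi$-part does match exactly as you describe, because there the dropped boundary index sits on the $g$-side and really is killed by $g_0=0$; the asymmetry is that $T$ shifts the $f$-index, and the ``constant term'' $f_1$ that $T$ discards re-enters through the lower-triangular ($\Phi^*$) half of the form.

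So the difficulty is not merely notational, and invoking absolute convergence does not repair it: the term-by-term identification on which your whole proof rests is false for one of the four sums. To salvage the argument you would either have to exhibit a cancellation of the residual $\sum_{u}\langle\Phi^*_u f_1,g_u\rangle_{\mathcal C}$ (there is none for general $f,g$), or conclude that \eqref{613} requires this correction term. The same boundary contribution is the delicate point in the complex-variable counterpart \eqref{elizabeth}, where it corresponds to $\int_{|a|=r}f(a)\,da=2\pi i f_1\neq 0$; a correct write-up must confront that term explicitly rather than assert that the re-indexed sums coincide.
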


The proof is a direct computation on the coefficients, and will be omitted here.\\

The reproducing kernel Krein space of left slice hyperholomorphic functions with reproducing kernel \eqref{kphi}, is constructed as in the previous section, but with the operator
${\rm Re}\, T_\Phi$ and using the unitary map \eqref{place_d_italie} between $\ell_{2,r}(\mathbb N,\mathcal C)$ and ${\mathbf H}_{2,r}^-(\mathcal C)$. We take
$\mathcal H=\ell_{2,r}(\mathbb N,\mathcal C)$ in \eqref{michelle} and \eqref{michelle1}, and for $a\in\mathbb B_r$ we set in \eqref{widehat} $g_a$ to be the operator from $\mathcal C$ into
$\mathcal H$ defined by
\begin{equation}
\label{two}
g_ac=\begin{pmatrix}c\\ \overline{a}c\\ \overline{a}^2c\\ \vdots\end{pmatrix}.
\end{equation}
Note that the hypothesis that the coefficient space is two-sided is used in \eqref{two}. The operator $g_a$ is well defined since $\mathcal C$ is left quaternionic, and right linear
because $\mathcal C$ is right quaternionic.

\begin{definition}
The space $\mathcal L(\Phi^\sharp)$ is the set of functions $F$ from $\mathbb B_r$ into $\mathcal C$ defined by
\begin{equation}
\langle F(a),c\rangle_{\mathcal C}=\langle |P|^{1/2}f,g_ac\rangle_{\ell_{2,r}(\mathbb N,\mathcal C)},
\end{equation}
endowed with the associated forms \eqref{shulamit111}-\eqref{eglantine}. As in \eqref{widehat} we set
\begin{equation}
F(a)=\widehat{|P|^{1/2}f}(a).
\end{equation}
\label{65}
\end{definition}

\section{Proof of Theorem \ref{th56}}
\setcounter{equation}{0}
\label{sec7}
 Theorem \ref{th56}, the realization theorem, is proved along the lines of the proof of Theorem \ref{th55} using the space $\mathcal L(\Phi^\sharp)$ and we proceed in a
number of steps.\\

STEP 1: {\sl The space $\mathcal L(\Phi^\sharp)$ is backward-shift invariant, where the shift is now defined
by
\[
(R_0f)(p)=p^{-1}(f(p)-f(0))=f_1+pf_2+\cdots
\]
for $f(p)=f_0+pf_1+p^2f_2\cdots$.}\\

This follows from Proposition \ref{sabine}. In fact, using \eqref{610} and \eqref{613} we have the counterpart of \eqref{123123123!!!} for quaternionic operators. 
By Remark \ref{karl} we can use the map $\chi$ defined in \eqref{chixhixhi} and \eqref{chi890}, and we have that
\[
(\chi(T^*|P|T))^2\le m (\chi(P))^2.
\]
Applying Loewner's theorem to these operators, and by uniqueness of the positive squareroot of a positive operator, we obtain
\[
\chi(T^*|P|T)\le \sqrt{m} \chi(P)
\]
and so
\[
(T^*|P|T)\le \sqrt{m} P
\]
in view of \eqref{chi890}-\eqref{chi891}. The argument is then as in the proof of Proposition \ref{closed}.\\

STEP 2: {\sl We have
\begin{equation}
(\widehat{Pg_ac})(p)=K_\Phi(p,a)c
\end{equation}
and the space $\mathcal L(\Phi^\sharp)$ has reproducing kernel $K_{\Phi^\sharp}$.}\\

Indeed, for $a,p\in\mathbb B_r$ and $c,d\in\mathcal C$ and using the condition \eqref{conditioncoluche} satisfied by the inner product in $\mathcal C$, we can write:
\[
\begin{split}
\langle (\widehat {Pg_ac})(p),d\rangle_{\mathcal C}&=[g_ac,g_pd]_\Phi\\
&=\sum_{v=1}^\infty\sum_{u=1}^\infty\langle \Phi_{v-u}\overline{a}^{v-1}c,\overline{p}^{u-1}d\rangle_{\mathcal C}+\\
&\hspace{5mm}+\sum_{u=1}^\infty\sum_{v=1}^u\langle \Phi_{u-v}^*\overline{a}^{v-1}c,\overline{p}^{u-1}d\rangle_{\mathcal C}\\
&=\sum_{u=1}^\infty \langle p^{u-1}\left(\sum_{v=u}^\infty\Phi_{v-u}\overline{a}^{v-u}\right)\overline{a}^{u-1}c,d\rangle_{\mathcal C}+\\
&\hspace{5mm}+\sum_{v=1}^\infty\langle p^{v-1}\left(\sum_{v=u}^\infty p^{u-v} \Phi_{u-v}^*\right)\overline{a}^{v-1}c,d\rangle_{\mathcal C}\\
&=\sum_{u=1}^\infty\langle p^{u-1}(\Phi^\sharp(a))^* \overline{a}^{u-1}c,d\rangle_{\mathcal C}+\\
&\hspace{5mm}+\sum_{u=1}^\infty\langle p^{u-1}\left(\Phi^\sharp(p)\right)\overline{a}^{u-1}c,d\rangle_{\mathcal C}\\
&=\langle K_\Phi(p,a)c,d\rangle_{\mathcal C}.
\end{split}
\]

We then apply Proposition \ref{z1z2z3}.\\

STEP 3: {\sl Let $C$ denote the point evaluation at the origin. We have}
\begin{align}
(C^{[*]}c)(p)&=(\Phi^\sharp(p)+\Phi(0))p,\quad c\in\mathcal C,\\
F(p)&=C\star(I-pR_0)^{-\star}F, \quad F\in\mathcal L(\Phi^\sharp).
\end{align}

For $F\in\mathcal L(\Phi^\sharp)$, with power series expansion
\[
F(p)=\sum_{u=0}^\infty p^uF_u,\quad F_0,F_1,\ldots\in\mathcal C
\]
we have,
\[
F_n=CR_0^nF,\quad n=0,1,\ldots,
\]
and so
\begin{equation}
F(p)=\sum_{u=0}^\infty p^uCR_0^nF=C\star(I-pR_0)^{-\star}F.
\end{equation}
For $c\in\mathcal C$ the function $p\mapsto (\Phi^\sharp(p)+\Phi(0))c$ belongs to $\mathcal L(\Phi^\sharp)$. In a way similar to the complex numbers setting we can write
\[
(\Phi^\sharp(p)+\Phi(0))c=C\star(I-pR_0)^{-\star}C^{[*]}c
\]
and in particular
\[
(\Phi^*(0)+\Phi(0))c=CC^{[*]}c
\]
Hence
\[
\begin{split}
(\Phi^\sharp c)(p)&=C\star(I-pR_0)^{-\star}C^{[*]}c+\frac{1}{2}\left(\Phi(0)^*-\Phi(0)-(\Phi(0)^*+\Phi(0))\right)c\\
&=C\star(I-pR_0)^{-\star}C^{[*]}c-\frac{1}{2}CC^{[*]}c-i({\rm Im}\, \Phi(0))c\\
&=\frac{1}{2}C\star(I-pR_0)^{-\star}\star(I+pR_0)\star C^{[*]}c-i({\rm Im}\, \Phi(0))c,
\end{split}
\]
and hence we get \eqref{rehovyavne1}.\\

STEP 4: {\sl We prove the uniqueness of the realization up to a weak isomorphism.}\\

We define two densely defined relations $R_1$ and $R_2$ in
$\mathcal L(\Phi^\sharp)\times\mathcal K$ and $\mathcal K\times\mathcal L(\Phi^\sharp)$ respectively by the span of pairs of the form
\[
(((I-pR_0)^{-\star}C^{[*]}c),(I-pV)^{-\star}C^{[*]}c)
\]
and
\[
(((I-pV)^{-\star}D^{[*]}c),(I-pR_0)^{-\star}C^{[*]}c),\quad c\in\mathcal C.
\]
Note that $R_1=R_2^{-1}$. From
\[
F_n^*=CR_0^nC^{[*]}=DR_0^nD^{[*]},
\]
we see that $R_1$ and $R_2$ are densely defined, and thus are graphs of densely defined isometries $W_1$ and $W_2$ from
$\mathcal L(\Phi^\sharp)$ into $\mathcal K$ and from $\mathcal K$ by into $\mathcal L(\Phi^\sharp)$ with dense ranges and
respectively defined by
\[
\begin{split}
W_1((I-pR_0)^{-\star}C^{[*]}c)&=(I-pV)^{-\star}C^{[*]}c\\
W_2((I-pV)^{-\star}D^{[*]}c)&=(I-pR_0)^{-\star}C^{[*]}c,\quad c\in\mathcal C.
\end{split}
\]
In general one cannot extend $W_1$ or $W_2$ to unitary continuous mappings since we are in the
Krein space setting. Such an extension will be possible in the case of Pontryagin, and in particular Hilbert, spaces.
Note that $W_1W_2=I$ and $W_2W_1=I$ on dense subspaces of $\mathcal K$ and $\mathcal L(\Phi^\sharp)$ respectively.

\begin{remark}
{\rm The case where the coefficient space $\mathcal C$ is a two-sided quaternionic Krein space (as opposed to a two-sided quaternionic Hilbert space) is treated as in the
complex setting; see Remark \ref{lastrem}.}
\end{remark}

\bibliographystyle{plain}
\def\cprime{$'$} \def\cprime{$'$} \def\cprime{$'$}
  \def\lfhook#1{\setbox0=\hbox{#1}{\ooalign{\hidewidth
  \lower1.5ex\hbox{'}\hidewidth\crcr\unhbox0}}} \def\cprime{$'$}
  \def\cprime{$'$} \def\cprime{$'$} \def\cprime{$'$} \def\cprime{$'$}
  \def\cprime{$'$}

\end{document}